\begin{document} 
\newtheorem{prop}{Proposition}[section]
\newtheorem{Def}{Definition}[section] \newtheorem{theorem}{Theorem}[section]
\newtheorem{lemma}{Lemma}[section] \newtheorem{Cor}{Corollary}[section]

\title[LWP for Chern-Simons-Dirac]{\bf Low regularity solutions for Chern-Simons-Dirac systems in the temporal and Coulomb gauge}
\author[Hartmut Pecher]{
{\bf Hartmut Pecher}\\
Fakult\"at f\"ur Mathematik und Naturwissenschaften\\
Bergische Universit\"at Wuppertal\\
Gau{\ss}str.  20\\
42119 Wuppertal\\
Germany\\
e-mail {\tt pecher@math.uni-wuppertal.de}}
\date{}

\begin{abstract}
We prove low regularity local well-posedness results in Bourgain-Klainerman-Machedon spaces for the Chern-Simons-Dirac system in the temporal gauge and the Coulomb gauge. Under slightly stronger assumptions on the data we also obtain "unconditional" uniqueness in the natural solution spaces.
\end{abstract}
\maketitle
\renewcommand{\thefootnote}{\fnsymbol{footnote}}
\footnotetext{\hspace{-1.5em}{\it 2000 Mathematics Subject Classification:} 
35Q40, 35L70 \\
{\it Key words and phrases:} Chern-Simons-Dirac,  
local well-posedness, Coulomb gauge, temporal gauge}
\normalsize 
\setcounter{section}{0}
\section{Introduction and main results}
\noindent Consider the Chern-Simons-Dirac system in two space dimensions :
\begin{align}
\label{1}
i \partial_t \psi + i \alpha^j \partial_j \psi & = m \beta \psi - \alpha^{\mu} A_{\mu} \psi\\
\label{2}
\partial_{\mu} A_{\nu} - \partial_{\nu} A_{\mu} & = -2 \epsilon_{\mu \nu \lambda} \langle \psi , \alpha^{\lambda} \psi \rangle
\end{align}
with initial data
\begin{equation}
\label{2'}
 \psi(0) = \psi_0 \quad , \quad  A_{\mu}(0) = a_{\mu} \, , 
\end{equation}
where we use the convention that repeated upper and lower indices are summed, Latin indices run over 1,2 and Greek indices over 0,1,2 with Minkowski metric of signature ($+$,$-$,$-$).
Here $\psi : {\mathbb R}^{1+2} \to {\mathbb C}^2$ , $A_{\nu} : {\mathbb R}^{1+2} \to {\mathbb R}$ , $m \in {\mathbb R}$ . 
$\alpha^1,\alpha^2, \beta$ are hermitian ($ 2 \times 2$)-matrices satisfying 
$\beta^2 = 
(\alpha^1)^2 = (\alpha^2)^2 = I $ , $ \alpha^j \beta + \beta \alpha^j = 0, $  $ 
\alpha^j \alpha^k + \alpha^k \alpha^j = 2 \delta^{jk} I $ , $ \alpha^0 = I $ . 
$\langle \cdot,\cdot \rangle $ denotes the ${\mathbb C}^2$ - scalar product. A 
particular representation is given by \\
$ \alpha^1 = \left( \begin{array}{cc}
0 & 1  \\ 
1 & 0  \end{array} \right)$ 
 , $ \alpha^2 = \left( \begin{array}{cc}
0 & -i  \\
i & 0  \end{array} \right)$ , $ \beta = \left( \begin{array}{cc}
1 & 0  \\
0 & -1  \end{array} \right)$ . \\
$\epsilon_{\mu \nu \lambda}$ is the totally skew-symmetric tensor with $\epsilon_{012} = 1$ . 

This model was proposed by Cho, Kim and Park \cite{CKP} and Li and Bhaduri \cite{LB}. 

The equations are invariant under the gauge transformations
$$ A_{\mu} \rightarrow A'_{\mu} = A_{\mu} + \partial_{\mu} \chi \, , \, \psi \rightarrow \psi' = e^{i\chi} \phi  \, . $$
The most common gauges are the Coulomb gauge $\partial^j A_j =0$ , the Lorenz gauge $\partial^{\mu} A_{\mu} = 0$ and the temporal gauge $A_0 = 0$. 

Local well-posedness for data with minimal regularity assumptions was shown by Huh \cite{H} in the Lorenz gauge for data $\psi_0 \in H^{\frac{5}{8}} $ , $a_{\mu} \in H^{\frac{1}{2}}$ using a null structure, in the Coulomb gauge for $\psi_0 \in H^{\frac{1}{2}+\epsilon}$ , $ a_i \in L^2$, and in temporal gauge for $\psi_0 \in H^{\frac{3}{4}+\epsilon},$  $a_j \in H^{\frac{3}{4}+\epsilon} + L^2$ , both without using a null structure. The result in Lorenz gauge was improved by Huh-Oh \cite{HO} where the regularity of the data was lowered down to  $\psi_0 \in H^s$ , $a_{\mu} \in H^s$ with $s > \frac{1}{4}$ . Their proof relies also on  a null structure in the nonlinear terms of the Dirac equation as well as the wave equation. They apply a Picard iteration in Bourgain-Klainerman-Machedon spaces $X^{s,b}$ , which implies uniqueness in these spaces. Independently Okamoto \cite{O} proved a similar result in Lorenz as well as Coulomb gauge also using a null structure of the system. The methods of Okamoto and Huh-Oh are different. Okamoto reduces the problem to a single Dirac equation with cubic nonlinearity for $\psi$, which does not contain $A_{\mu}$ any longer. From a solution $\psi$ of this equation the potentials $A_{\mu}$ can be constructed by solving a wave equation in Lorenz gauge and an elliptic equation in Coulomb gauge. Huh-Oh on the other hand directly solve a coupled system of a Dirac equation for $\psi$ and a wave equation for $A_{\mu}$. Recently Bournaveas-Candy-Machihara \cite{BCM} proved local well-posedness in Coulomb gauge under similar regularity assumptions without use of a null structure. Their proof relies on a bilinear Strichartz estimate given by Klainerman-Tataru \cite{KT}.

A low regularity local well-posedness result in  temporal gauge was given by Tao \cite{T1} for the Yang-Mills equations.

In the present paper we consider the temporal gauge as well as the Coulomb gauge. In temporal gauge we improve the result of Huh \cite{H} to data $\psi_0 \in H^s$,  $a_j \in H^{s+\frac{1}{8}}$ with $s > \frac{3}{8}$. We use Bourgain-Klainerman-Machedon spaces $X^{s,b}$ adapted to the phase functions $\tau \pm |\xi|$ on one hand and $\tau$ on the other hand. We decompose $A_j$ into its divergence-free part $A^{df}_j$ and its curl-free part $A^{cf}_j$. The main problem here is that there seems to be no null structure in the nonlinearity $A^{cf}_j \alpha^j \psi$ in the Dirac equation whereas in Lorenz gauge $A^{cf}_{\mu} \alpha^{\mu} \psi$ has such a null structure. In fact all the other terms possess such a null structure. However we are not able to use it for an improvement of our result. We apply the bilinear estimates in wave-Sobolev spaces established in d'Ancona-Foschi-Selberg \cite{AFS} which rely on Strichartz estimates. Morover we use a variant of an estimate for the $L^6_x L^2_t$ - norm for the solution of the wave equation which goes back to Tataru and Tao. When applying this estimate we partly follow Tao's arguments in the case of the Yang-Mills equations \cite{T1}. We prove existence and uniqueness in $X^{s,b}$ - spaces first (Theorem \ref{Theorem 1.1}). Then we prove unconditional uniqueness under the stronger assumption $s > \frac{19}{40}$ (Theorem \ref{Theorem 1.2}) by using an idea of Zhou \cite{Z}.

In Coulomb gauge we make the same regularity asasumptions as Okamoto \cite{O} and Bournaveas-Candy-Machihara \cite{BCM}, namely $\psi_0 \in H^{\frac{1}{4}+\epsilon}$, and also reduce the problem to a single Dirac equation with cubic nonlinearity. We give a short (alternative) proof of local well-posedness in $X^{s,b}$ - spaces without use of a null structure (Theorem {\ref{Theorem 1.1'}) using d'Ancona-Foschi-Selberg \cite{AFS} (cf. Proposition \ref{Prop. 1.1}). We also prove unconditional uniqueness in the space $\psi \in C^0([0,T],H^s)$ under the assumption $s > \frac{1}{3}$ (Theorem \ref{Theorem 1.2'}).

We first give some notation. We denote the Fourier transform with respect to space and time by $\,\widehat{}$ . The operator
$|\nabla|^{\alpha}$ is defined by ${\mathcal F}(|\nabla|^{\alpha} f)(\xi) = |\xi|^{\alpha} ({\mathcal F}f)(\xi)$, where ${\mathcal F}$ is the Fourier transform, and similarly $ \langle \nabla \rangle^{\alpha}$ , where $\langle \cdot \rangle := (1+|\cdot|^2)^{\frac{1}{2}}$. The inhomogeneous and homogeneous Sobolev spaces are denoted by $H^{s,p}$ and $\dot{H}^{s,p}$, respectively. For $p=2$ we simply denote them by $H^s$ and $\dot{H}^s$. We repeatedly use the Sobolev embeddings $\dot{H}^{s,p} \hookrightarrow L^q$ for  $1<p\le q < \infty$ and $\frac{1}{q} = \frac{1}{p}-\frac{s}{2}$, and also $\dot{H}^{1+} \cap \dot{H}^{1-} \hookrightarrow  L^{\infty}$ in two space dimensions. 
$a+ := a + \epsilon$ for a sufficiently small $\epsilon >0$ , so that $a<a+<a++$ , and similarly $a--<a-<a$ .

We define the standard spaces $X^{s,b}_{\pm}$ of Bourgain-Klainerman-Machedon type belonging to the half waves as the completion of the Schwarz space  $\mathcal{S}({\mathbb R}^3)$ with respect to the norm
$$ \|u\|_{X^{s,b}_{\pm}} = \| \langle \xi \rangle^s \langle  \tau \pm |\xi| \rangle^b \widehat{u}(\tau,\xi) \|_{L^2_{\tau \xi}} \, . $$ 
Similarly we define the wave-Sobolev spaces $X^{s,b}_{|\tau|=|\xi|}$ with norm
$$ \|u\|_{X^{s,b}_{|\tau|=|\xi|}} = \| \langle \xi \rangle^s \langle  |\tau| - |\xi| \rangle^b \widehat{u}(\tau,\xi) \|_{L^2_{\tau \xi}}  $$ and also $X^{s,b}_{\tau =0}$ with norm 
$$\|u\|_{X^{s,b}_{\tau=0}} = \| \langle \xi \rangle^s \langle  \tau  \rangle^b \widehat{u}(\tau,\xi) \|_{L^2_{\tau \xi}} \, .$$
We also define $X^{s,b}_{\pm}[0,T]$ as the space of the restrictions of functions in $X^{s,b}_{\pm}$ to $[0,T] \times \mathbb{R}^2$ and similarly $X^{s,b}_{|\tau| = |\xi|}[0,T]$ and $X^{s,b}_{\tau =0}[0,T]$. We frequently use the obvious embeddings $X^{s,b}_{|\tau|=|\xi|} \hookrightarrow X^{s,b}_{\pm}$      for $b \le 0$ and $X^{s,b}_{\pm} \hookrightarrow X^{s,b}_{|\tau|=|\xi|} $ for $b \ge 0$. 
\vspace{0.5em}

We now formulate our main results in the case of the temporal gauge.
\begin{theorem}
\label{Theorem 1.1}
Let $\epsilon > 0$ and $s > \frac{3}{8}$. The Chern-Simons-Dirac system (\ref{1}),(\ref{2}),(\ref{2'}) in temporal gauge $A_0=0$ with data $\psi_0 \in H^s({\mathbb R}^2)$ , $a_j \in H^{s+\frac{1}{8}}({\mathbb R}^2)$, satisfying the compatability condition $\partial_1 a_2- \partial_2 a_1 = -2 \langle \psi_0, \psi_0 \rangle$, has a local solution 
$$ \psi \in C^0([0,T],H^s({\mathbb R}^2)) \quad , \quad |\nabla|^{\epsilon} A_j \in C^0([0,T],H^{s+\frac{1}{8}-\epsilon}({\mathbb R}^2)) \, . $$
More precisely $\psi = \psi_+ + \psi_-$ with $\psi_{\pm} \in X^{s,\frac{1}{2}+}_{\pm}[0,T]$. If 
$ A = A^{df} + A^{cf}  $
is the decomposition into its divergence-free part and its "curl-free"
 part, where
\begin{align*}
A^{df} & = (-\Delta)^{-1}(\partial_2(\partial_1A_2-\partial_2 A_1),\partial_1(\partial_2 A_1-\partial_1 A_2)) \, , \\
A^{cf} & = -(-\Delta)^{-1}(\partial_1(\partial_1 A_1 + \partial_2 A_2),\partial_2(\partial_1 A_1 + \partial_2 A_2)) = -(-\Delta)^{-1} \nabla div A \, ,
\end{align*}
one has
$$ A^{cf} \in X^{s+\frac{1}{8},\frac{1}{2}+}_{\tau =0} [0,T] \quad , \quad |\nabla|^{\epsilon} A^{df} \in X^{s+\frac{3}{8}-\epsilon,\frac{1}{2}+}_{|\tau|=|\xi|}[0,T] $$
and in these spaces uniqueness holds. Moreover we have $\psi_{\pm} \in X^{s,1}_{\pm} [0,T]$.
\end{theorem}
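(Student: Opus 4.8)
The plan is to solve the system by a contraction mapping in the product of the three Bourgain-Klainerman-Machedon spaces named in the statement. First I would diagonalize the Dirac operator: since $(\alpha^j\xi_j)^2 = |\xi|^2 I$ by the Clifford relations, the projections $\Pi_\pm(\xi) = \frac12(I \pm \alpha^j\xi_j/|\xi|)$ split $\psi = \psi_+ + \psi_-$, $\psi_\pm = \Pi_\pm\psi$, and each half obeys a half-wave equation $(i\partial_t \pm |\nabla|)\psi_\pm = \Pi_\pm(m\beta\psi - \alpha^j A_j\psi)$, the mass term being harmless lower order. In the temporal gauge $A_0=0$, equation (\ref{2}) with $\mu=0,\nu=j$ gives the evolution $\partial_t A_j = -2\epsilon_{0jk}\langle\psi,\alpha^k\psi\rangle$, while the purely spatial component gives the constraint $\partial_1 A_2 - \partial_2 A_1 = -2\langle\psi,\psi\rangle$, matching the assumed compatibility condition. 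I would build this constraint into the iteration by defining the divergence-free part directly and elliptically in terms of $\psi$, $A^{df} = -2(-\Delta)^{-1}(\partial_2\langle\psi,\psi\rangle,-\partial_1\langle\psi,\psi\rangle)$, leaving only the curl-free part to be evolved via the purely temporal equation $\partial_t A^{cf} = -(-\Delta)^{-1}\nabla\,\mathrm{div}(\partial_t A)$ integrated from the datum $a^{cf}$; this is why $A^{cf}$ naturally lives in the space $X^{s,b}_{\tau=0}$ adapted to the phase $\tau$.

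The core of the argument is a family of multilinear estimates making the Duhamel map a contraction on a small ball, with a small positive power of $T$ to spare. For the Dirac equations I must bound $\|A_j^{df}\alpha^j\psi\|_{X^{s,-1/2+}_\pm}$ and $\|A_j^{cf}\alpha^j\psi\|_{X^{s,-1/2+}_\pm}$ by products of the iterate norms, and for the potentials I must bound the quadratic form $\langle\psi,\psi\rangle$ in $X^{s+3/8-\epsilon,1/2+}_{|\tau|=|\xi|}$ (after applying the order $-1$ elliptic multiplier that defines $A^{df}$, together with $|\nabla|^\epsilon$) and the forms $\langle\psi,\alpha^k\psi\rangle$ feeding $\partial_t A^{cf}$ in $X^{s+1/8,1/2+}_{\tau=0}$. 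The products of two of the half-waves $\psi_+,\psi_-$ I would control with the bilinear wave-Sobolev estimates of d'Ancona-Foschi-Selberg (Proposition \ref{Prop. 1.1}), which exhibit and use the null structure hidden in $\langle\psi,\alpha^k\psi\rangle$ and in the term $A_j^{df}\alpha^j\psi$.

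The main obstacle is the single term $A_j^{cf}\alpha^j\psi$, for which no usable null structure is available and where $A^{cf}$, being concentrated near $\tau=0$, offers no dispersive gain. Here I would leave the bilinear machinery aside and estimate $A^{cf}$ through a variant of the $L^6_x L^2_t$ bound for the wave equation going back to Tataru and Tao, partly following Tao's treatment of the analogous difficulty for the Yang-Mills system in \cite{T1}; it is this term that forces the threshold $s > \frac38$ and the extra $\frac18$ derivative on the data $a_j$. Once all estimates are in place, the contraction on $[0,T]$ for small $T$ is routine, uniqueness in the iteration spaces is automatic, and the embedding $X^{s,b}_\bullet \hookrightarrow C^0([0,T],H^s)$ for $b>\frac12$ yields the claimed time-continuity. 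Finally the gain $\psi_\pm \in X^{s,1}_\pm$ I would obtain by a bootstrap: since the nonlinearity lands in $X^{s,0}_\pm = L^2_t H^s$, the Duhamel formula upgrades the modulation exponent of $\psi_\pm$ from $\frac12+$ to $1$.
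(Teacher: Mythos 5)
Your plan coincides with the paper's proof in every essential respect: the same resolution of the constraint into an elliptic determination of $A^{df}=-2(-\Delta)^{-1}(\partial_2\langle\psi,\psi\rangle,-\partial_1\langle\psi,\psi\rangle)$ plus a transport equation for $A^{cf}$, the same half-wave reduction via $\Pi_\pm$, the same triple of iteration spaces $X^{s,\frac12+}_{\pm}$, $X^{s+\frac18,\frac12+}_{\tau=0}$, $X^{s+\frac38-\epsilon,\frac12+}_{|\tau|=|\xi|}$, the d'Ancona--Foschi--Selberg product estimates for the terms involving $A^{df}$ and the quadratic currents, Tao's Yang--Mills-style argument together with the Tataru--Tao $L^p_xL^2_t$ bounds for the problematic term $A^{cf}_j\alpha^j\psi$ (which indeed dictates $s>\frac38$ and the extra $\frac18$ derivative on $a_j$), and the same observation that the Dirac nonlinearities actually land in $X^{s,0}_\pm$, upgrading $\psi_\pm$ to $X^{s,1}_\pm$ via Duhamel. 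The one inaccuracy is your claim that Proposition \ref{Prop. 1.1} ``exhibits and uses'' the null structure of $\langle\psi,\alpha^k\psi\rangle$ and $A^{df}_j\alpha^j\psi$: it is a plain product estimate for wave-Sobolev spaces, and the paper explicitly notes that, although these terms do carry a null structure, it is \emph{not} exploited anywhere --- the numerology closes at $s>\frac38$ without it, so this mischaracterization does not affect the validity of your argument.
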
 
{\bf Remark:} The Chern-Simons-Dirac system is invariant under the scaling
$$ \psi^{(\lambda)}(t,x) = \lambda\psi(\lambda t,\lambda x) \, , \, A^{(\lambda)}(t,x) = \lambda A_{\mu}(\lambda t, \lambda x) \, . $$
Thus in 2+1 dimensions the scaling critical Sobolev exponent is $s=0$ , i.e. $\psi_0$ , $a_{\mu} \in H^s = L^2$ . In Lorenz gauge Huh-Oh \cite{HO} remarked that their result $s > \frac{1}{4}$ is probably optimal in view of Zhou \cite{Z1}, who proved that is the case for a system of nonlinear wave equations with nonlinearities, which fulfill a null condition. In our case of the temporal gauge however the system is reduced to a coupled system of a wave equation for $\psi$ and a transport equation for $A^{cf}$ where null conditions seem to be not useful  because they are only adapted for wave equations. Nevertheless it would be desirable to improve our result  to $s > \frac{1}{4}$ for $\psi_0$ and $a_j$ . 

\begin{theorem}
\label{Theorem 1.2} Let the assumptions of Theorem \ref{Theorem 1.1} be fulfilled.
If moreover $s > \frac{19}{40}$,  the solution of (\ref{1}),(\ref{2}),(\ref{2'}) is unique in the space
$ \psi \in C^0([0,T],H^s({\mathbb R}^2)) \, , \, A^{cf} \in C^0([0,T],H^{s+\frac{1}{8}}({\mathbb R}^2)) \, , \,  |\nabla|^{\epsilon} A^{df} \in C^0([0,T],H^{s+\frac{3}{8}-\epsilon}({\mathbb R}^2)) \, . $
\end{theorem}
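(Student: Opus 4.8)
The plan is to run a difference argument in the spirit of Zhou \cite{Z}: given two solutions $(\psi^{(1)},A^{(1)})$ and $(\psi^{(2)},A^{(2)})$ in the stated class with identical data, I set $\delta\psi=\psi^{(1)}-\psi^{(2)}$ and $\delta A=A^{(1)}-A^{(2)}$, show that they vanish on a short interval $[0,T']$, and then iterate to exhaust $[0,T]$. The basic gain comes from the elementary embedding $C^0([0,T'],H^\sigma)\hookrightarrow L^2([0,T'],H^\sigma)=X^{\sigma,0}_{\pm}[0,T']$ (and likewise for the companion spaces at modulation $0$) with norm $\lesssim (T')^{1/2}$ times the $C^0H^\sigma$-norm. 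Thus each given solution lies, carrying a small factor $(T')^{1/2}$, in $X^{s,0}_\pm$, in $X^{s+1/8,0}_{\tau=0}$, and, after applying $|\nabla|^\epsilon$, in $X^{s+3/8-\epsilon,0}_{|\tau|=|\xi|}$.

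Next I project $\delta\psi=\delta\psi_++\delta\psi_-$ onto the half-waves and invoke the inhomogeneous transfer estimate $\|\delta\psi_\pm\|_{X^{s,1/2+}_\pm[0,T']}\lesssim\|\delta\mathcal N\|_{X^{s,-1/2+}_\pm[0,T']}$, where the data contribution vanishes. The source $\delta\mathcal N$ is the difference of the nonlinear terms; every summand is bilinear and contains exactly one difference factor ($\delta\psi$ or $\delta A$) and one given-solution factor. Using the difference of the constraint, transport and wave equations underlying Theorem \ref{Theorem 1.1}, I express $\delta A^{cf}$ and $\delta A^{df}$ through $\delta\psi,\psi^{(1)},\psi^{(2)}$, so that after substitution each contribution to $\delta\mathcal N$ takes the schematic form (given solution) $\times$ (difference).

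I then estimate each such product by the bilinear inequalities of d'Ancona--Foschi--Selberg \cite{AFS} together with the $L^6_xL^2_t$ bound already used in Theorem \ref{Theorem 1.1}, placing the difference factor in $X^{s,1/2+}$ (or in the appropriate companion space) and the given-solution factor in the modulation-$0$ space, where it supplies the decisive $(T')^{1/2}$. This yields an inequality of the type $\|\delta\psi\|_{X^{s,1/2+}[0,T']}+\|\delta A\|_{(\cdots)}\lesssim (T')^{1/2}\,M\,\big(\|\delta\psi\|_{X^{s,1/2+}[0,T']}+\|\delta A\|_{(\cdots)}\big)$, where $M$ majorizes the $C^0H^{(\cdot)}$-norms of the two solutions; for $T'$ so small that $(T')^{1/2}M<1$ one is forced to conclude $\delta\psi=0$ and $\delta A=0$ on $[0,T']$.

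The main obstacle is that these bilinear estimates must now be run with one modulation index equal to $0$, which is considerably more demanding than in Theorem \ref{Theorem 1.1}, where both factors carried modulation $\tfrac12+$; this loss of smoothing in the time variable is precisely what pushes the threshold up from $s>\tfrac38$ to $s>\tfrac{19}{40}$. The hardest summand is the term $A^{cf}_j\alpha^j\psi$ together with its difference analogue, which possesses no null structure; for it I expect to lean on the $L^6_xL^2_t$ estimate going back to Tataru and Tao, following Tao's treatment of the Yang--Mills system \cite{T1}, so as to trade the missing modulation gain against the additional $s$-regularity.
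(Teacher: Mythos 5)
Your proposal diverges from the paper at the decisive point, and the divergence is a genuine gap, not just a different route. You leave the two given solutions in modulation-zero spaces, $C^0([0,T'],H^\sigma)\hookrightarrow X^{\sigma,0}$, and try to close a contraction for the difference by bilinear estimates with one factor at $b=0$. But the key estimates do not survive this. For the worst term $A^{cf}_j\alpha^j\delta\psi$ (which you correctly identify as the hardest), the paper's proof of (\ref{2.1}) uses the weight $\langle\tau_1\rangle^{\frac{1}{2}+}$ on the $A^{cf}$ factor in an essential way: in Case 1 through $\|v_1\|_{L^2_xL^\infty_t}\lesssim\|v_1\|_{X^{0,\frac{1}{2}+}_{\tau=0}}$, and in Case 2 through the averaging principle, which replaces $\langle\tau_1\rangle^{\frac{1}{2}+}$ by $\chi_{|\tau_1|\sim 1}$ before the Schur test; with $A^{cf,(1)}$ known only in $X^{s+\frac{1}{8},0}_{\tau=0}$ neither step is available, and Proposition \ref{Prop. 1.1} does not apply to $X_{\tau=0}$ at all. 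The mixed-norm estimates (\ref{16})--(\ref{19}) you invoke likewise presuppose the weight $\langle|\tau|-|\xi|\rangle^{\frac{1}{2}+}$ (or $\frac{3}{8}+$) on the factor they are applied to, so they cannot be applied to a given-solution factor either. Worse, after you substitute the elliptic/transport representations of $\delta A^{df}$, $\delta A^{cf}$ into the Dirac difference equation, you get trilinear terms carrying \emph{two} given-solution factors, both at $b=0$; then the hypothesis $b_0+b_1+b_2>\frac{1}{2}$ of Proposition \ref{Prop. 1.1} is violated for output in $X^{s,-\frac{1}{2}++}$ (one has $b_0=\frac{1}{2}--$, $b_1=b_2=0$), and the H\"older substitute $H^{2s}\cdot H^s\subset H^s$ would require $s>\frac{1}{2}$. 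So the central absorption inequality of your scheme is unproven and, with the paper's toolkit, out of reach at $s$ near $\frac{19}{40}$.

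The paper's actual proof (Zhou's idea) runs no difference estimate at all in the temporal gauge: it \emph{upgrades the regularity of an arbitrary given solution} until it lands in the uniqueness class of Theorem \ref{Theorem 1.1}. Concretely, Sobolev multiplication gives $A_j\alpha^j\psi\in L^2_tH^{2s-\frac{7}{8}}$, hence $\psi_\pm\in X^{2s-\frac{7}{8},1}_\pm[0,T]$, and interpolation with $X^{s,0}_\pm$ yields $\psi_\pm\in X^{\frac{1}{4}+\alpha_1,\frac{1}{2}+}_\pm$ with $\alpha_1=\frac{3}{2}s-\frac{11}{16}-$; then Claims 2 and 3 upgrade $A^{cf}$ to $X^{\cdot,\frac{1}{2}+}_{\tau=0}$ (via the transport equation, which is also the repair for your $b=0$ problem) and $A^{df}$ to $X^{\cdot,\frac{1}{2}+}_{|\tau|=|\xi|}$, and Claims 4 and 5 feed back $\psi_\pm\in X^{3\alpha_k-,1}_\pm$, iterating $\alpha_{k+1}\approx\frac{3}{2}\alpha_k+\frac{s}{2}-\frac{1}{4}$ until $\alpha_k>\frac{1}{8}$, whereupon Theorem \ref{Theorem 1.1} applies; no smallness of $T'$ is used anywhere. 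In particular the threshold $\frac{19}{40}$ is not the price of a one-shot modulation loss, as your heuristic suggests: it is exactly the escape condition $\alpha_1>\frac{1}{2}-s$ of the bootstrap map, i.e. $\frac{3}{2}s-\frac{11}{16}>\frac{1}{2}-s\iff s>\frac{19}{40}$. Note that even in the Coulomb-gauge Theorem \ref{Theorem 1.2'}, where the paper \emph{does} run a difference estimate close in spirit to yours, it first upgrades the given solutions to $X^{\frac{1}{4}+\frac{\epsilon}{4},\frac{1}{4}+\epsilon}_\pm$ via $X^{0,1}_\pm$ and interpolation before any bilinear estimate is applied; they are never left at $b=0$. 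To repair your proposal you would have to prepend exactly such an upgrading stage for $\psi$, $A^{cf}$ and $A^{df}$ --- at which point you have reconstructed the paper's argument and the contraction step becomes superfluous.
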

\vspace{0.5em}
Consider now the Coulomb gauge condition $\partial_j A^j = 0$. In this case one easily checks using (\ref{2}) that the potentials $A_{\mu}$ satisfy the elliptic equations
\begin{equation}
\label{1.10}
A_0 = \Delta^{-1}(\partial_2 \langle \psi,\alpha^1 \psi \rangle - \partial_1 \langle \psi,\alpha^2 \psi \rangle) \, , \, A_1 = \Delta^{-1} \partial_2 \langle \psi,\psi \rangle \, , \, A_2 = - \Delta^{-1} \partial_1 \langle \psi,\psi \rangle \, . 
\end{equation}
Inserting this into (\ref{1}) we obtain
\begin{equation}
\label{1.11}
i \partial_t \psi + i \alpha^j \partial_j \psi  = m \beta \psi + N(\psi,\psi,\psi) \, ,
\end{equation}
where
\begin{align*}
 &N(\psi_1,\psi_2,\psi_3) \\
 &= \Delta^{-1}\left( \partial_1 \langle  \psi_1,\alpha_2 \psi_2 \rangle - \partial_2 \langle \psi_1,\alpha_1 \psi_2 \rangle + \partial_2 \langle \psi_1,\psi_2 \rangle \alpha_1 - \partial_1 \langle \psi_1,\psi_2 \rangle \alpha_2 \right) \psi_3 \, . 
\end{align*}
In the sequel we consider this nonlinear Dirac equation with initial condition
\begin{equation}
\label{1.12}
\psi(0) = \psi_0 \, .
\end{equation}

Using an idea of d'Ancona - Foschi -Selberg \cite{AFS1} we simplify  (\ref{1.11}) by 
considering the projections onto the one-dimensional eigenspaces of the 
operator 
$-i \alpha \cdot \nabla = -i \alpha^j \partial_j$ belonging to the eigenvalues $ \pm |\xi|$. These 
projections are given by $\Pi_{\pm} =$ $\Pi_{\pm}(D)$, where  $ D = 
\frac{\nabla}{i} $ and $\Pi_{\pm}(\xi) = \frac{1}{2}(I 
\pm \frac{\xi}{|\xi|} \cdot \alpha) $. Then $ 
-i\alpha \cdot \nabla = |D| \Pi_+(D) - |D| \Pi_-(D) $ and $ \Pi_{\pm}(\xi) \beta
= \beta \Pi_{\mp}(\xi) $. Defining $ \psi_{\pm} := \Pi_{\pm}(D) \psi$  , the Dirac  equation can be rewritten as
\begin{equation}
\label{4'}
(-i \partial_t \pm |D|)\psi_{\pm}  =  m\beta \psi_{\mp} + \Pi_{\pm}N(\psi_+ + \psi_-,\psi_+ + \psi_-, \psi_+ + \psi_-) \, . 
\end{equation}
The initial condition is transformed into
\begin{equation}
\label{6'}
\psi_{\pm}(0) = \Pi_{\pm}\psi_0 \, .
\end{equation}

We now formulate our results in the case of the Coulomb gauge.

\begin{theorem}
\label{Theorem 1.1'}
Assume $\psi_0 \in H^s({\mathbb R}^2)$ with $s > \frac{1}{4}$. Then (\ref{1.11}),(\ref{1.12}) is locally well-posed in $H^s({\mathbb R}^2)$. More precisely there exists $T>0$ , such that there exists a unique solution $\psi = \psi_+ + \psi_-$ with $\psi_{\pm} \in X^{s,\frac{1}{2}+}_{\pm}[0,T]$. This solution belongs to $C^0([0,T],H^s({\mathbb R}^2))$.
\end{theorem}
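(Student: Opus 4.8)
The plan is a contraction-mapping (Picard iteration) argument for the system (\ref{4'}), (\ref{6'}) in the product space $X^{s,\frac12+}_+[0,T]\times X^{s,\frac12+}_-[0,T]$. Using Duhamel's formula for the half-wave operators $-i\partial_t\pm|D|$ and the standard linear theory in $X^{s,b}$, one has
$$\|\psi_\pm\|_{X^{s,\frac12+}_\pm[0,T]}\lesssim \|\Pi_\pm\psi_0\|_{H^s}+\|m\beta\psi_\mp+\Pi_\pm N(\psi,\psi,\psi)\|_{X^{s,-\frac12+}_\pm[0,T]},$$
and the usual time-localisation lemma supplies a gain $T^\theta$, $\theta>0$, once the nonlinearity is controlled with a little room above $b=-\frac12$. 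Since $\Pi_\pm\beta=\beta\Pi_\mp$, the mass term is linear and of lower order, hence harmless. Everything therefore reduces to the trilinear estimate
$$\|\Pi_\pm N(\psi_1,\psi_2,\psi_3)\|_{X^{s,-\frac12+}_\pm}\lesssim \prod_{i=1}^{3}\|\psi_i\|_{X^{s,\frac12+}_{\pm_i}}$$
uniformly in the signs $\pm_i$. Granting this, multilinearity turns the difference estimates into the same bound, the map is a contraction on a small ball for $T$ small, and the fixed point lies in $C^0([0,T],H^s)$ via $X^{s,\frac12+}_\pm[0,T]\hookrightarrow C^0([0,T],H^s)$; uniqueness in the iteration space is then automatic.

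It remains to prove the trilinear estimate, which is the whole content of the theorem. The projections $\Pi_\pm$ are bounded zeroth-order multipliers and may be dropped, and $N$ has the schematic form $N(\psi_1,\psi_2,\psi_3)=\bigl(|\nabla|^{-1}\mathcal{B}(\psi_1,\psi_2)\bigr)\psi_3$, where $\mathcal{B}$ is a bilinear combination of the hermitian products $\langle\psi_1,\alpha^k\psi_2\rangle$ and $\langle\psi_1,\psi_2\rangle$, and the factor $|\nabla|^{-1}$ arises from $\nabla\Delta^{-1}$, whose symbol $\xi/|\xi|^2$ gains one spatial derivative. I would exploit the self-adjointness of $|\nabla|^{-1}$: testing against $\phi\in X^{-s,\frac12-}_\pm$ (the dual of the output space) and moving the nonlocal operator onto the test function reduces the claim to
$$\left|\int \mathcal{B}(\psi_1,\psi_2)\,|\nabla|^{-1}\bigl(\psi_3\overline{\phi}\bigr)\right|\lesssim \|\psi_1\|_{X^{s,\frac12+}}\|\psi_2\|_{X^{s,\frac12+}}\|\psi_3\|_{X^{s,\frac12+}}\|\phi\|_{X^{-s,\frac12-}}.$$
By the duality pairing of $X^{a,c}_{|\tau|=|\xi|}$ with $X^{-a,-c}_{|\tau|=|\xi|}$ this factorises into two bilinear estimates of d'Ancona--Foschi--Selberg type (Proposition \ref{Prop. 1.1}): a bound for $\mathcal{B}(\psi_1,\psi_2)$ in $X^{a,c}_{|\tau|=|\xi|}$ and a bound for $\psi_3\overline{\phi}$ in $X^{-a-1,-c}_{|\tau|=|\xi|}$, the extra $-1$ absorbing the $|\nabla|^{-1}$. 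The free exponents $a,c$ are chosen inside the admissible region of Proposition \ref{Prop. 1.1}, whose hypotheses constrain only suitable sums of the regularity and modulation exponents, so that the slightly subcritical weight $\frac12-$ on $\phi$ and the low-modulation weight of the quadratic factor cause no difficulty.

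The principal obstacle is this exponent bookkeeping at the threshold $s>\frac14$. The system is scaling-critical at $s=0$ and $N$ has net order $-1$ (a cubic term smoothed by one derivative), so the available margin is small: one must split the single gained derivative of $|\nabla|^{-1}$ between the two pairings so that both bilinear estimates are simultaneously permitted, and it is the optimisation of this split against the AFS range that forces $s>\frac14$. A second, genuine difficulty is the low-frequency behaviour of $\Delta^{-1}$, whose symbol is not locally square-integrable at $\xi=0$ in two dimensions; the output frequencies $|\xi|\lesssim 1$ must therefore be isolated and treated by hand, using that a product of two $L^2$-based factors has bounded spatial Fourier transform, so that the $\langle\xi\rangle^s$ weight is inert there and a crude H\"older--Sobolev bound suffices. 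Because no null structure is invoked, all resonant (nearly parallel) interactions have to be absorbed purely by the Strichartz integrability underlying Proposition \ref{Prop. 1.1} rather than by cancellation; the crux of the argument is that at the stated regularity this integrability is, just barely, enough.
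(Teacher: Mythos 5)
Your proposal follows essentially the same route as the paper's proof: reduction by duality to a four-linear integral, discarding the projections and the null structure, splitting the single derivative gained from $\Delta^{-1}\nabla$ (symbol $\xi_0/|\xi_0|^2$) between the two quadratic pairings so that each is handled by the d'Ancona--Foschi--Selberg bilinear estimates of Proposition \ref{Prop. 1.1} (the paper takes the concrete split $X^{s-\frac{1}{2},\frac{1}{4}}_{|\tau|=|\xi|}$ versus $X^{-s-\frac{1}{2},-\frac{1}{4}}_{|\tau|=|\xi|}$, whose binding conditions, e.g.\ $(s_0+b_0)+2s_1+2s_2>1$, are exactly what forces $s>\frac{1}{4}$), together with a separate crude H\"older--Sobolev treatment of the low output frequencies $|\xi_0|\le 1$. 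Your plan is correct, including the correctly anticipated source of the threshold $s>\frac{1}{4}$ and the low-frequency issue, which the paper resolves by a fractional Leibniz/Sobolev argument placing $|\nabla|^{-\frac{1}{2}}$ on each pairing rather than your $L^\infty_\xi$ observation, but both are routine.
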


The unconditional uniqueness result is the following
\begin{theorem}
\label{Theorem 1.2'}
Assume $ \psi_0 \in H^s({\mathbb R}^2)$ with $ s > \frac{1}{3}$. The solution of (\ref{1.11}),(\ref{1.12}) is unique in $C^0([0,T],H^s({\mathbb R}^2))$.
\end{theorem}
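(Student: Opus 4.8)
The plan is to follow Zhou \cite{Z} and run a difference estimate on short time intervals, exploiting the fact that a solution known only to lie in $C^0([0,T],H^s)$ automatically sits in the low-regularity Bourgain space $X^{s,0}_\pm[0,T]=L^2_tH^s$. Let $\psi=\psi_++\psi_-$ and $\psi'=\psi'_++\psi'_-$ be two solutions of \eqref{1.11},\eqref{1.12} in $C^0([0,T],H^s)$ with the same data, where $\psi_\pm=\Pi_\pm\psi$, $\psi'_\pm=\Pi_\pm\psi'$, and put $w_\pm=\psi_\pm-\psi'_\pm$. Subtracting the two copies of \eqref{4'} and telescoping the cubic term, $w_\pm$ solves $(-i\partial_t\pm|D|)w_\pm=m\beta w_\mp+\Pi_\pm G$, where $G$ is a sum of trilinear expressions of type $N(\cdot,\cdot,\cdot)$, each linear in $w$ with the two remaining slots filled by $\psi$ or $\psi'$. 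Since the data cancel, the Duhamel representation of $w_\pm$ carries no homogeneous part.

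I would fix a target regularity $\sigma\le s$ (allowing $\sigma<s$ to create room, since $w\equiv0$ as a distribution already gives uniqueness in $C^0H^s$) and an exponent $b<\tfrac12$, and bound $w$ in $X^{\sigma,b}_\pm[0,T']$ on a short subinterval $[0,T']$. The inhomogeneous half-wave estimate on $[0,T']$ produces a genuine power of the length, $\|w_\pm\|_{X^{\sigma,b}_\pm[0,T']}\lesssim T'^{\,\theta}\|\Pi_\pm G\|_{X^{\sigma,b-1+\theta}_\pm[0,T']}+T'^{\,\theta}\|w_\mp\|_{X^{\sigma,0}_\pm[0,T']}$, the massive term being harmless since it costs no derivatives. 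The two frozen factors $\psi,\psi'$ are inserted through the embedding $C^0([0,T'],H^s)\hookrightarrow X^{s,0}_\pm[0,T']$, which additionally costs only a factor $T'^{1/2}$, while the single factor $w$ is kept in $X^{\sigma,b}_\pm[0,T']$. The finiteness of $\|w\|_{X^{\sigma,b}_\pm[0,T']}$ needed to start the absorption argument follows from $w\in C^0([0,T],H^\sigma)$ together with $\partial_tw\in C^0([0,T],H^{\sigma-1})$ read off from the equation, which places $w$ in $X^{\sigma',b}_\pm[0,T']$ for suitable $\sigma'\le\sigma$ and $b<1$.

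Everything thus reduces to the multilinear estimate
$$\|N(\phi_1,\phi_2,\phi_3)\|_{X^{\sigma,b-1+}_\pm}\lesssim \|\phi_{i_0}\|_{X^{\sigma,b}_\pm}\prod_{i\ne i_0}\|\phi_i\|_{X^{s,0}_\pm},$$
required for each choice of the distinguished slot $i_0$ and for $\sigma$ slightly below $s$. The heart of the matter is that only one of the three inputs carries a positive $b$-weight: the absence of any Strichartz gain from the two factors held at $b=0$ must be compensated by the one-derivative smoothing of the operator $\Delta^{-1}\partial$ inside $N$, together with the bilinear wave-Sobolev estimates of d'Ancona--Foschi--Selberg \cite{AFS} applied both to the pair entering the quadratic form and to the distinguished factor. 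A dyadic decomposition with the usual split into resonant ($++$/$--$) and non-resonant ($+-$) interactions is needed, and I expect the constraint $s>\tfrac13$ to surface exactly as the worst case of balancing these products once two of the inputs are stripped of their time-frequency localisation.

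Combining the two displays yields $\|w\|_{X^{\sigma,b}_\pm[0,T']}\le C\,T'^{\,\theta}\big(\|\psi\|_{C^0H^s}+\|\psi'\|_{C^0H^s}\big)^2\,\|w\|_{X^{\sigma,b}_\pm[0,T']}$, so choosing $T'$ small forces $w=0$ on $[0,T']$; since the constant is uniform, the argument is iterated finitely many times to cover $[0,T]$, giving $\psi=\psi'$. The main obstacle is precisely the trilinear estimate with two factors at $b=0$: this is where transversality can no longer be exploited, so the smoothing of $\Delta^{-1}\partial$ and the AFS bilinear estimates must do all the work, and it is for this reason that the threshold rises from $s>\tfrac14$ in Theorem \ref{Theorem 1.1'} to $s>\tfrac13$ here.
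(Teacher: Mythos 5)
Your skeleton (Zhou-style difference estimate, Duhamel with a $T'^{\theta}$ gain, absorption on short intervals) matches the paper's, but the load-bearing step is set up differently, and as formulated it fails. You keep the two frozen factors only in $X^{s,0}_{\pm}[0,T']$, so your key trilinear estimate has two inputs with no modulation weight, and this cannot be closed by the d'Ancona--Foschi--Selberg estimates: every admissible application of Proposition \ref{Prop. 1.1} requires $b_0+b_1+b_2>\frac{1}{2}$ and all pairwise sums $\ge 0$. Chase any grouping of $N(\phi_1,\phi_2,\phi_3)$ with exponents $(b,0,0)$, $b<\frac{1}{2}$, dual exponent $1-b-$ on the output: pairing the two $b=0$ factors forces the quadratic form into modulation $-\frac{1}{2}-$, and the remaining product of $X^{\cdot,-\frac{1}{2}-}$ with $X^{\sigma,b}$ then has $b$-sum $(1-b-)+(-\frac{1}{2}-)+b<\frac{1}{2}$ and even violates the pairwise condition $(-\frac{1}{2}-)+b\ge 0$; pairing the distinguished factor with a frozen one first is blocked by $b_0+b_2\ge 0$ with $b_2=0$, which pins the intermediate output at modulation $\le 0$ and again leaves the final $b$-sum at most $\frac{1}{2}-$. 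Since $b=0$ factors carry no Strichartz information, the smoothing $\Delta^{-1}\partial$ alone cannot compensate; the "main obstacle" you flag at the end is not a technicality to be checked but the precise point where the argument breaks.

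The paper's proof supplies the missing idea: before differencing, use the equation to upgrade the given solutions themselves. The purely spatial chain $\||\nabla|^{-1}\langle\psi_j,\alpha_i\psi_k\rangle\,\psi_l\|_{L^2_x}\lesssim\|\psi_j\|_{L^3_x}\|\psi_k\|_{L^3_x}\|\psi_l\|_{L^3_x}\lesssim\|\psi_j\|_{H^{1/3}_x}\|\psi_k\|_{H^{1/3}_x}\|\psi_l\|_{H^{1/3}_x}$ (via $|\nabla|^{-1}:L^{3/2}\to L^6$) shows $N(\psi,\psi,\psi)\in L^2_{xt}$ for any $C^0H^{1/3+\epsilon}$ solution, hence $\psi_{\pm},\psi'_{\pm}\in X^{0,1}_{\pm}[0,T]$; interpolating with $X^{s,0}_{\pm}$ then gives $\psi_{\pm},\psi'_{\pm}\in X^{\frac{1}{4}+\frac{\epsilon}{4},\frac{1}{4}+\epsilon}_{\pm}[0,T]$. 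It is these norms, with positive modulation weight $\frac{1}{4}+\epsilon$ on the frozen factors, that are fed into the difference estimate (difference in $X^{0,\frac{1}{2}+}_{\pm}$, dual factor at $\frac{1}{2}--$), after which Proposition \ref{Prop. 1.1} closes with choices such as $s_0=\frac{1}{2}$, $b_0=0$, $b_1=\frac{1}{2}+$, $s_2=\frac{1}{4}+\frac{\epsilon}{4}$, $b_2=\frac{1}{4}+\epsilon$. Note also that the threshold $s>\frac{1}{3}$ enters through the Sobolev embedding $H^{1/3}(\mathbb{R}^2)\hookrightarrow L^3$ in this upgrade step, not through a resonant/non-resonant dyadic balance as you predicted — no null structure or $++/+-$ case analysis appears. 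Your remark that $\partial_t w$ can be read off the equation only addresses the qualitative finiteness of the difference norm; it does not substitute for applying the equation to $\psi$ and $\psi'$ themselves, which is what converts the bare hypothesis $\psi\in C^0([0,T],H^s)$ into usable temporal regularity.
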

Fundamental for the proof of our theorems are the following bilinear estimates in wave-Sobolev spaces which were proven by d'Ancona, Foschi and Selberg in the two dimensional case $n=2$ in \cite{AFS} in a more general form which include many limit cases which we do not need.
\begin{prop}
\label{Prop. 1.1}
Let $n=2$. The estimate
$$\|uv\|_{X_{|\tau|=|\xi|}^{-s_0,-b_0}} \lesssim \|u\|_{X^{s_1,b_1}_{|\tau|=|\xi|}} \|v\|_{X^{s_2,b_2}_{|\tau|=|\xi|}} $$ 
holds, provided the following conditions hold:
\begin{align*}
\nonumber
& b_0 + b_1 + b_2 > \frac{1}{2} \\
\nonumber
& b_0 + b_1 \ge 0 \\
\nonumber
& b_0 + b_2 \ge 0 \\
\nonumber
& b_1 + b_2 \ge 0 \\
\nonumber
&s_0+s_1+s_2 > \frac{3}{2} -(b_0+b_1+b_2) \\
\nonumber
&s_0+s_1+s_2 > 1 -\min(b_0+b_1,b_0+b_2,b_1+b_2) \\
\nonumber
&s_0+s_1+s_2 > \frac{1}{2} - \min(b_0,b_1,b_2) \\
\nonumber
&s_0+s_1+s_2 > \frac{3}{4} \\
 &(s_0 + b_0) +2s_1 + 2s_2 > 1 \\
\nonumber
&2s_0+(s_1+b_1)+2s_2 > 1 \\
\nonumber
&2s_0+2s_1+(s_2+b_2) > 1 \\
\nonumber
&s_1 + s_2 \ge \max(0,-b_0) \\
\nonumber
&s_0 + s_2 \ge \max(0,-b_1) \\
\nonumber
&s_0 + s_1 \ge \max(0,-b_2)   \, .
\end{align*}
\end{prop}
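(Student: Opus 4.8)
The plan is to prove this estimate by duality followed by a dyadic (Littlewood--Paley together with modulation) decomposition, reducing it to elementary bilinear $L^2$ bounds on the light cone whose summability is governed precisely by the listed conditions. First I would dualize: since $(X^{s_0,b_0}_{|\tau|=|\xi|})^{\ast}=X^{-s_0,-b_0}_{|\tau|=|\xi|}$ under the $L^2_{t,x}$ pairing, the asserted estimate is equivalent to the trilinear bound
$$\Big|\int_{\mathbb{R}^{1+2}} u_1\,u_2\,u_3\,\mathrm{d}t\,\mathrm{d}x\Big| \lesssim \|u_1\|_{X^{s_1,b_1}_{|\tau|=|\xi|}}\|u_2\|_{X^{s_2,b_2}_{|\tau|=|\xi|}}\|u_3\|_{X^{s_0,b_0}_{|\tau|=|\xi|}},$$
where $u_3$ carries the exponents $(s_0,b_0)$ (complex conjugation is harmless, the norms being conjugation invariant). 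Passing to the Fourier side and setting $f_i=\langle\xi\rangle^{s_i}\langle|\tau|-|\xi|\rangle^{b_i}\widehat{u_i}\in L^2$, the form becomes an integral over the diagonal $\zeta_1+\zeta_2+\zeta_3=0$, $\zeta_i=(\tau_i,\xi_i)$, of $\prod_i f_i(\zeta_i)$ divided by $\prod_i\langle\xi_i\rangle^{s_i}\langle|\tau_i|-|\xi_i|\rangle^{b_i}$. This expression is manifestly symmetric under permuting the three slots, which explains why the hypotheses are invariant under simultaneous permutation of $(s_0,b_0),(s_1,b_1),(s_2,b_2)$.

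Next I would decompose each $f_i$ into dyadic pieces $f_i^{N_i,L_i}$ supported in $\langle\xi_i\rangle\sim N_i$ and $\langle|\tau_i|-|\xi_i|\rangle\sim L_i$, and split further according to $\epsilon_i=\mathrm{sgn}(\tau_i)$, so that each piece concentrates near one sheet $\tau_i=\epsilon_i|\xi_i|$ of the cone. On a fixed piece the weights reduce to the constants $N_i^{-s_i}L_i^{-b_i}$, so the matter comes down to the core dyadic inequality
$$\Big|\int\!\!\int \textstyle\prod_i f_i^{N_i,L_i}\,\mathrm{d}\sigma\Big| \lesssim C(N,L)\prod_i\|f_i^{N_i,L_i}\|_{L^2},$$
after which I must verify that $\sum N_1^{-s_1}N_2^{-s_2}N_3^{-s_0}L_1^{-b_1}L_2^{-b_2}L_3^{-b_0}\,C(N,L)$ converges. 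Each line in the statement is exactly the condition guaranteeing summability of this geometric series in one dyadic regime (which modulation is largest, which frequency dominates, and whether the three frequencies are comparable or one is much smaller), with the strict inequalities accounting for the logarithmic losses at the borderline exponents.

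The analytic heart is the dyadic factor $C(N,L)$. By Plancherel and Cauchy--Schwarz, $\|u^{N_1,L_1}v^{N_2,L_2}\|_{L^2}^2$ is controlled by the supremum over the output $(\tau_0,\xi_0)$ of the measure of the set of $(\tau_1,\xi_1)$ with $(\tau_1,\xi_1)$ in the first support region and $(\tau_0-\tau_1,\xi_0-\xi_1)$ in the second, i.e. by the measure of the intersection of two thickened cone-slabs in $\mathbb{R}^{1+2}$. This is where the light-cone geometry enters, through the resonance identity
$$\sum_{i=1}^3\big(\tau_i-\epsilon_i|\xi_i|\big)=-\sum_{i=1}^3\epsilon_i|\xi_i|\qquad\text{on }\tau_1+\tau_2+\tau_3=0,$$
whose left side has size $O(\max_i L_i)$, so that $\big|\sum_i\epsilon_i|\xi_i|\big|\lesssim\max_i L_i$. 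In the genuinely transversal configurations this produces a gain measured by the ratio of the smallest to the largest frequency, whereas in the almost-parallel case (equal signs on the two large frequencies) the modulation is automatically forced to be large; both mechanisms feed the Strichartz/Klainerman--Tataru type factor in $C(N,L)$ (cf. \cite{KT}).

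I expect the main obstacle to be organizational rather than a single hard estimate: one must run through all sign patterns $\epsilon_i$, all orderings of the $N_i$ and $L_i$, and the separate regimes of the output frequency, matching each to the correct inequality in the list --- in particular the coupled conditions such as $(s_0+b_0)+2s_1+2s_2>1$ and $s_0+s_1+s_2>\tfrac34$, which control precisely the diagonal regime where all three frequencies are comparable and all modulations small. Since these case-by-case computations, including every limiting case, are exactly what is carried out in d'Ancona--Foschi--Selberg \cite{AFS}, in practice I would invoke their theorem directly; the scheme above is the route by which one reconstructs it.
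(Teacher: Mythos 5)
Your proposal is correct and ultimately takes the same route as the paper: the paper offers no proof of Proposition \ref{Prop. 1.1} at all, but states it as a special case of the product estimates proven by d'Ancona--Foschi--Selberg \cite{AFS}, which is precisely the citation you fall back on after your sketch. Your outline (duality to a symmetric trilinear form, dyadic decomposition in frequency and modulation, and the resonance identity bounding $\bigl|\sum_i \epsilon_i|\xi_i|\bigr|$ by the maximal modulation, with the listed conditions governing dyadic summability) is a faithful reconstruction of the strategy of \cite{AFS}, so no gap remains.
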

\vspace{0.5em}
Another decisive tool are the estimates for the wave equation in the following proposition.
\begin{prop}
The following estimates hold
\begin{align}
\label{15}
\|u\|_{L^6_{xt}} & \lesssim \|u\|_{X^{\frac{1}{2},\frac{1}{2}+}_{|\tau|=|\xi|}} \, , \\
\label{16}
\|u\|_{L^p_x L^2_t} & \lesssim \|u\|_{X^{\frac{1}{2}-\frac{2}{p},\frac{1}{2}+}_{|\tau|=|\xi|}} \quad \mbox{for} \,\, 6 \le p < \infty \, ,\\
\label{16'}
\mbox{especially} \quad \|u\|_{L^6_x L^2_t} &\lesssim \|u\|_{X^{\frac{1}{6},\frac{1}{2}+}_{|\tau|=|\xi|}} \, , \\
\label{16''}
\|u\|_{L^{\infty}_x L^2_t} &\lesssim \|u\|_{X^{\frac{1}{2}+,\frac{1}{2}+}_{|\tau|=|\xi|}} \, , \\
\label{16'''}
\|u\|_{L^{\infty}_x L^{2+}_t} &\lesssim \|u\|_{X^{\frac{1}{2}+,\frac{1}{2}+}_{|\tau|=|\xi|}} \, , \\
\label{18}
\|u\|_{L^6_x L^{2+}_t} &\lesssim \|u\|_{X^{\frac{1}{6}+,\frac{1}{2}+}_{|\tau|=|\xi|}} \, , \\
\label{17}
\|u\|_{L^4_x L^{2+}_t} &\lesssim \|u\|_{X^{\frac{1}{8}+,\frac{3}{8}+}_{|\tau|=|\xi|}} \, , \\
\label{19} 
\|u\|_{L^p_x L^{2+}_t} & \lesssim \|u\|_{X^{\frac{1}{2}-\frac{2}{p}+,\frac{1}{2}+}_{|\tau|=|\xi|}}  \quad \mbox{for} \,\, 6 \le p < \infty \, . 
\end{align}
\end{prop}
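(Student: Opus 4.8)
The plan is to reduce every one of these estimates to a fixed-data estimate for the free half-wave propagators $e^{\pm it|D|}$ and then invoke the transference principle. Recall that if $\|e^{\pm it|D|}f\|_Z \lesssim \|f\|_{H^\sigma}$ for a space-time norm $Z$ that is invariant under the modulation $u \mapsto e^{i\lambda t}u$, then $\|u\|_Z \lesssim \|u\|_{X^{\sigma,b}_{\pm}}$ for every $b > \tfrac12$; splitting $u$ according to the sign of $\tau$, so that $\langle |\tau|-|\xi|\rangle$ becomes $\langle \tau\mp|\xi|\rangle$, converts this into the asserted bound in terms of $X^{\sigma,\frac12+}_{|\tau|=|\xi|}$. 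Because all Sobolev exponents appearing are nonnegative, the homogeneous estimates $\|e^{\pm it|D|}f\|_Z\lesssim\|f\|_{\dot H^\sigma}$ automatically upgrade to the inhomogeneous $H^\sigma$ versions needed for the $\langle\xi\rangle^s$-weights. Thus it suffices to establish the free estimates; the weight $b=\tfrac12+$ is exactly the threshold demanded by transference, whereas the exceptional value $b=\tfrac38+$ in \eqref{17} will arise from interpolation rather than transference.

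The core of the matter is the $L^p_x L^2_t$ bound $\|e^{it|D|}f\|_{L^p_x L^2_t} \lesssim \|f\|_{\dot H^{\frac12 - \frac2p}}$ for $6 \le p \le \infty$, from which \eqref{16}, \eqref{16'} and (after a Sobolev step) \eqref{16''} follow. To prove it I would pass to polar coordinates $\xi = r\omega$ and apply Plancherel in $t$ to the representation $e^{it|D|}f(x)=\int_0^\infty e^{itr} r\,[\int_{S^1} e^{irx\cdot\omega}\widehat f(r\omega)\,d\omega]\,dr$, obtaining
$$\|e^{it|D|}f(x)\|_{L^2_t}^2 \;=\; \int_0^\infty r^2\,\Big|\int_{S^1} e^{irx\cdot\omega}\widehat f(r\omega)\,d\omega\Big|^2\,dr .$$
Since $p \ge 2$, Minkowski's inequality pulls the $L^p_x$-norm inside the $r$-integral, and the inner object is the rescaled Fourier extension from the unit circle of $\omega\mapsto \widehat f(r\omega)$. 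The decisive input is then the Stein–Tomas adjoint restriction estimate for the circle, $\|\widehat{g\,d\sigma}\|_{L^p(\mathbb R^2)}\lesssim \|g\|_{L^2(S^1)}$, valid precisely for $p\ge 6$; combining it with the scaling $y=rx$ produces a factor $r^{-2/p}$, and after reinserting $d\xi = r\,dr\,d\omega$ one lands exactly on the weight $|\xi|^{1-4/p}$, i.e. on $\|f\|_{\dot H^{1/2-2/p}}$. The endpoint $p=\infty$ is even more elementary, following from Cauchy–Schwarz on $S^1$ in the same identity. Estimate \eqref{15} is the classical $L^6_{xt}$ Strichartz estimate for the two-dimensional wave equation, which I take as known.

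It remains to treat the $L^{2+}_t$ variants and the off-exponent \eqref{17}. For \eqref{16''} I would apply the Sobolev embedding $\dot H^{s_1,p}(\mathbb R^2_x)\hookrightarrow L^\infty_x$ with $s_1>\tfrac2p$ to \eqref{16}; this costs a strictly positive amount of regularity and thereby produces $\tfrac12+$ in place of $\tfrac12$. The estimates \eqref{18}, \eqref{16'''} and \eqref{19} follow by interpolating the corresponding $L^2_t$ bounds \eqref{16'}, \eqref{16''}, \eqref{16} against the Strichartz estimate \eqref{15} (respectively against the crude bound $\|u\|_{L^\infty_{xt}}\lesssim\|u\|_{X^{1+,\frac12+}_{|\tau|=|\xi|}}$), with an arbitrarily small interpolation parameter so that $L^2_t$ turns into $L^{2+}_t$ while the Sobolev index grows by only $\epsilon$ and $b$ stays at $\tfrac12+$. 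Finally \eqref{17} is the single case where $p=4$ lies below the Stein–Tomas range, so the direct argument is unavailable; instead I would interpolate \eqref{18} against the trivial identity $\|u\|_{L^2_{xt}}=\|u\|_{X^{0,0}_{|\tau|=|\xi|}}$ with interpolation parameter $\theta=\tfrac34$, which simultaneously reproduces the $x$-exponent $4$, the Sobolev index $\tfrac18+$, and — since three quarters of $\tfrac12$ equals $\tfrac38$ — the reduced weight $b=\tfrac38+$.

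I expect the only genuinely nontrivial point to be the circle-restriction argument for the $L^p_x L^2_t$ family, namely the radial bookkeeping that matches the Stein–Tomas scaling to the sharp exponent $\tfrac12-\tfrac2p$, together with the observation that this method cannot reach $p=4$; the latter is precisely what forces \eqref{17} to be obtained by interpolation against the trivial estimate, and hence what explains the atypical weight $b=\tfrac38+$.
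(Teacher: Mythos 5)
Your proposal is correct, and its overall architecture — transference to free half-wave solutions, the $L^p_xL^2_t$ family as the core input, and then the identical interpolation scheme — coincides with the paper's. The one genuine difference is how the core estimate \eqref{16} is obtained: the paper cites Tataru's Theorem B2 from the appendix of Klainerman--Machedon, $\|\mathcal{F}_t u\|_{L^2_\tau L^6_x}\lesssim\|u_0\|_{\dot H^{1/6}}$, exchanges norms by Minkowski, and upgrades from $p=6$ to general $p\ge 6$ via the Sobolev embedding $H^{\frac13-\frac2p,6}_x\hookrightarrow L^p_x$; you instead prove the whole family directly by Plancherel in $t$ in polar coordinates plus the Stein--Tomas adjoint restriction estimate for the circle with the scaling factor $r^{-2/p}$, which is in effect a self-contained reproof of B2 together with its $p>6$ extension in one stroke. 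Your radial bookkeeping checks out ($2-\frac4p = 2s+1$ gives $s=\frac12-\frac2p$), and your Cauchy--Schwarz argument at $p=\infty$ even yields the sharp homogeneous endpoint $\|e^{it|D|}f\|_{L^\infty_xL^2_t}\lesssim\|f\|_{\dot H^{1/2}}$, which is stronger than the $\frac12+$ claimed in \eqref{16''} (the paper, losing an $\epsilon$ through $H^{\frac13+,6}_x\hookrightarrow L^\infty_x$, gets only the inhomogeneous version — either suffices). The remaining steps match the paper exactly: \eqref{18} by interpolating \eqref{16'} with \eqref{15}; \eqref{16'''} and \eqref{19} by interpolating \eqref{16''} and \eqref{16} against $\|u\|_{L^\infty_{xt}}\lesssim\|u\|_{X^{1+,\frac12+}_{|\tau|=|\xi|}}$ (for \eqref{19} one should start from \eqref{16} at an exponent $p_0$ slightly below $p$ so the interpolated $x$-exponent lands on $p$; the $+$ in $\frac12-\frac2p+$ absorbs this); and \eqref{17} by interpolating \eqref{18} against $\|u\|_{L^2_{xt}}=\|u\|_{X^{0,0}_{|\tau|=|\xi|}}$ with $\theta=\frac34$, exactly as in the paper, which is indeed what produces the atypical weight $\frac38+$.
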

\begin{proof}
(\ref{15}) is the standard Strichartz estimate combined with the transfer principle. Concerning (\ref{16}) we use \cite{KMBT} (appendix by D. Tataru) Thm. B2:
$$ \|{\mathcal F}_t u \|_{L^2_{\tau} L^6_x} \lesssim \|u_0\|_{\dot{H}^{\frac{1}{6}}_x} \, , $$
if $u=e^{it |\nabla|}u_0$ , and ${\mathcal F}$ denotes the Fourier transform with respect to time. This implies by Plancherel, Minkowski's inequality and Sobolev's embedding theorem
$$ \|u\|_{L^p_x L^2_t} = \|{\mathcal F}_t u\|_{L^p_x L^2_{\tau}} \lesssim \|{\mathcal F}_t u\|_{L^2_{\tau} L^p_x} \lesssim \|{\mathcal F}_t u\|_{L^2_{\tau} H^{\frac{1}{3}-\frac{2}{p},6}_x} \lesssim \|u_0\|_{H^{\frac{1}{2}-\frac{2}{p},2}_x} \, . $$
The transfer principle gives (\ref{16}). (\ref{16''}) follows similarly using $H^{\frac{1}{3}+,6}_x  \hookrightarrow L^{\infty}_x$ . (\ref{18}) is obtained by interpolation between (\ref{16'}) and (\ref{15}) , and (\ref{17}) by interpolation between (\ref{18}) and the trivial identity $\|u\|_{L^2_{xt}} = \|u\|_{X^{0,0}_{|\tau|=|\xi|}}$. Moreover we obtain (\ref{16'''}) and (\ref{19}) by interpolation between (\ref{16''}) and (\ref{16}), resp. , and the estimate $\|u\|_{L^{\infty}_{xt}} \lesssim \|u\|_{X^{1+,\frac{1}{2}+}_{|\tau|=|\xi|}}$ .
\end{proof}

\section{Reformulation of the problem in temporal gauge}
Imposing the temporal gauge condition $A_0 =0$ the system (\ref{1}),(\ref{2}) is equivalent to
\begin{align}
\label{3}
&i \partial_t \psi + i \alpha^j \partial_j \psi  = m \beta \psi - \alpha^{j} A_{j} \psi\\
\label{4}
&\partial_t A_1 = -2 \langle \psi , \alpha^2 \psi \rangle \quad  , \quad \partial_t A_2 = 2 \langle \psi , \alpha^1 \psi \rangle \\
\label{5}
&\partial_1 A_2 - \partial_2 A_1  = -2 \langle \psi, \psi \rangle \, .
\end{align}
We first show that (\ref{5}) is fulfilled for any solution of (\ref{3}),(\ref{4}), if it holds initially, i.e., if the following compatability condition holds:
\begin{equation}
\partial_1 A_2(0) - \partial_2 A_1(0) = -2 \langle \psi(0),\psi(0) \rangle \, ,
\end{equation}
which we assume from now on. Indeed one easily calculates using (\ref{3}):
\begin{equation}
\label{8}
\partial_t \langle \psi,\psi \rangle = -\partial_j \langle \psi, \alpha^j \psi \rangle \, ,
\end{equation}
which implies by (\ref{4})
$$ \partial_t(\partial_1 A_2 - \partial_2 A_1) = 2 \partial_j \langle \psi,\alpha^j \psi \rangle = -2 \partial_t \langle \psi, \psi \rangle \, , $$
so that (\ref{5}) holds, if it holds initially. Thus we only have to solve (\ref{3}) and (\ref{4}).

We decompose $A=(A_1,A_2)$ into its divergence-free part $A^{df}$ and its "curl-free" part $A^{cf}$, namely $A=A^{df} + A^{cf}$, where
\begin{align*}
A^{df} & = (-\Delta)^{-1}(\partial_2(\partial_1A_2-\partial_2 A_1),\partial_1(\partial_2 A_1-\partial_1 A_2)) \, , \\
A^{cf} & = -(-\Delta)^{-1}(\partial_1(\partial_1 A_1 + \partial_2 A_2),\partial_2(\partial_1 A_1 + \partial_2 A_2)) = -(-\Delta)^{-1} \nabla div A \, .
\end{align*}
Then (\ref{5}) and (\ref{4}) imply
\begin{align}
\label{6}
A^{df} &= -2(-\Delta)^{-1} (\partial_2 \langle \psi,\psi \rangle,-\partial_1 \langle \psi, \psi \rangle) \,, \\
\label{7}
\partial_t A_j^{cf} & = -2(-\Delta)^{-1} \partial_j(\partial_2 \langle \psi,\alpha^1 \psi \rangle - \partial_1 \langle \psi,\alpha^2 \psi \rangle) \, .
\end{align}
Reversely, defining $A=A^{df}+A^{cf}$, we show that our new system (\ref{3}),(\ref{6}),(\ref{7}) implies (\ref{3}),(\ref{4}),(\ref{5}), so that both systems are equivalent. It only remains to show that (\ref{4}) holds. By (\ref{6}),(\ref{7}),(\ref{8}) we obtain
\begin{align*}
\partial_t A_1 & = \partial_t A_1^{df} + \partial_t A_1^{cf} \\
& = -2(-\Delta)^{-1}\big( \partial_2 \partial_t \langle \psi,\psi \rangle + \partial_1(\partial_2 \langle \psi,\alpha^1 \psi \rangle - \partial_1 \langle \psi, \alpha^2 \psi \rangle ) \big) \\
& =  2(-\Delta)^{-1}\big( \partial_2 \partial_j \langle \psi,\alpha^j \psi \rangle - \partial_1(\partial_2 \langle \psi,\alpha^1 \psi \rangle - \partial_1 \langle \psi, \alpha^2 \psi \rangle ) \big) \\
& = 2(-\Delta)^{-1}(\partial_2^2 + \partial_1^2) \langle \psi,\alpha^2 \psi \rangle = -2 \langle \psi, \alpha^2 \psi \rangle
\end{align*}
and similarly
$$ \partial_t A_2 = 2 \langle \psi,\alpha^1 \psi \rangle \, . $$
In the same way in which we obtained (\ref{4'}) the Dirac equation (\ref{3}) can be rewritten as
\begin{equation}
\label{9}
(- i\partial_t \pm|\nabla|)\psi_{\pm} = -m\beta \psi_{\mp} - \Pi_{\pm}(\alpha^j A_j \psi) \, , 
\end{equation}
where $A_j = A_j^{df} + A_j^{cf}$ , and in (\ref{6}),(\ref{7}) and (\ref{9}) we replace $\psi$ by $\psi_+ + \psi_-$.

\section{Proof of Theorem \ref{Theorem 1.1}}
Taking the considerations of the previous section into account Theorem \ref{Theorem 1.1} reduces to the following proposition and its corollary.
\begin{prop}
\label{Prop. 2.1}
Let $\epsilon >0$ and $s > \frac{3}{8}$. There exists $T>0$ such that the system (\ref{6}),(\ref{7}),(\ref{9}) has a unique local solution $\psi_{\pm} \in X^{s,\frac{1}{2}+}_{\pm}[0,T] \, , \, A^{cf} \in X^{s+\frac{1}{8},\frac{1}{2}+}_{\tau =0} [0,T] $. Moreover $A^{df}$ satisfies $|\nabla|^{\epsilon} A^{df}_j \in X^{s+\frac{3}{8}-\epsilon,\frac{1}{2}+}_{|\tau|=|\xi|}[0,T]$ and $\psi_{\pm} \in X^{s,1}_{\pm}[0,T]$.
\end{prop}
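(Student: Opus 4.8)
The plan is to solve the coupled system \eqref{6},\eqref{7},\eqref{9} by a contraction-mapping (Picard iteration) argument in the product space
$$
\mathcal{X} = X^{s,\frac{1}{2}+}_+[0,T] \times X^{s,\frac{1}{2}+}_-[0,T] \times X^{s+\frac{1}{8},\frac{1}{2}+}_{\tau=0}[0,T].
$$
First I would set up the Duhamel formulation: the half-wave equations \eqref{9} are inverted by the usual $X^{s,b}_{\pm}$ energy estimate, which costs a small power $T^{0+}$ and lets us work with the $b=\frac{1}{2}+$ norm at the expense of taking the forcing in $b=-\frac{1}{2}+$; the transport equation \eqref{7} for $A^{cf}$ is inverted in $X^{s+\frac{1}{8},\frac{1}{2}+}_{\tau=0}[0,T]$ by integrating in time, again gaining a factor of $T^{0+}$ (this is the point of the $\tau=0$ weight: the symbol for $\partial_t$ is just $\tau$). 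The divergence-free part $A^{df}$ is not a dynamical variable at all — it is given explicitly by \eqref{6} as a quadratic expression in $\psi$ — so its bound, and the claim $|\nabla|^\epsilon A^{df}_j \in X^{s+\frac{3}{8}-\epsilon,\frac{1}{2}+}_{|\tau|=|\xi|}[0,T]$, follows by substituting the fixed point $\psi$ and applying a single bilinear estimate.

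The heart of the argument is a collection of multilinear estimates controlling each nonlinear term in the $b=-\frac{1}{2}+$ (resp. $\tau=0$, $b=-\frac{1}{2}+$) norm of the appropriate space. I would organise them by term. For the Dirac forcing $\Pi_\pm(\alpha^j A_j\psi)$ in \eqref{9} I must separately estimate the contribution of $A^{df}$ and of $A^{cf}$. The $A^{df}$ piece is a product of a wave-Sobolev function (coming from \eqref{6}) with $\psi$, and I would reduce it, via the embeddings $X^{s,b}_\pm \hookrightarrow X^{s,b}_{|\tau|=|\xi|}$ for $b\ge 0$ and its dual for $b\le 0$, to the bilinear estimate of Proposition \ref{Prop. 1.1}, after using \eqref{6} itself (another bilinear estimate) to place $A^{df}$ in a wave-Sobolev space. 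The $A^{cf}$ piece is genuinely different: $A^{cf}$ lives only in $X^{s+\frac{1}{8},\frac{1}{2}+}_{\tau=0}$, with no dispersive (wave) regularity, so Proposition \ref{Prop. 1.1} does not directly apply; here I would invoke the mixed Lebesgue estimates \eqref{15}--\eqref{19} for the wave equation, estimating $\psi$ in a suitable $L^p_xL^{2+}_t$ norm and $A^{cf}$ in the complementary Hölder-dual norm, since the $\tau=0$ weight still gives good $L^\infty_t L^2_x$-type control on $A^{cf}$. The quadratic source terms \eqref{6},\eqref{7} are estimated by the same two families of tools applied to $\langle\psi,\alpha^\lambda\psi\rangle$.

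The main obstacle, as the introduction itself flags, is precisely the term $A^{cf}_j\alpha^j\psi$ in the Dirac equation: there is no usable null structure, and $A^{cf}$ carries no wave-type dispersion, so the clean bilinear machinery of Proposition \ref{Prop. 1.1} is unavailable for this interaction. I expect the bound here to be the binding constraint that forces the regularity threshold $s>\frac{3}{8}$ and the gain of $\frac{1}{8}$ derivative on the data $a_j$. The plan is to follow Tao's Yang--Mills strategy \cite{T1}: exploit the gain of one full derivative from the operator $(-\Delta)^{-1}\partial_j$ in \eqref{7} (so that $A^{cf}$ is one derivative smoother than the quadratic source), and then balance the product $A^{cf}_j\psi$ by placing one factor in $L^6_xL^2_t$-type spaces via \eqref{16'},\eqref{18} and the other in $L^p_x$-type spaces, carefully tracking the $\epsilon$'s so the powers close. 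Once every term is bounded by a product of $\mathcal{X}$-norms with a $T^{0+}$ prefactor, standard contraction on a small ball yields existence and uniqueness; the improved time-regularity $\psi_\pm\in X^{s,1}_\pm[0,T]$ then follows by feeding the fixed point back into \eqref{9}, since the already-established bounds show the right-hand side lies in $X^{s,0}_\pm$, and inverting the half-wave operator gains a full derivative in the $\langle\tau\pm|\xi|\rangle$ weight.
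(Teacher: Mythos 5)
Your overall architecture — contraction in $X^{s,\frac12+}_{\pm}[0,T]\times X^{s+\frac18,\frac12+}_{\tau=0}[0,T]$ with $A^{df}$ non-dynamical via (\ref{6}), linear estimates costing $T^{0+}$, four nonlinear estimates, and the $X^{s,1}_{\pm}$ upgrade by proving the Dirac nonlinearities with zero modulation weight on the left — is exactly the paper's. But there is a genuine gap at the decisive estimate (\ref{2.1}) for $A^{cf}_j\alpha^j\psi$, precisely the term you correctly identify as the obstacle. Your tool there is H\"older with the mixed Lebesgue estimates (\ref{15})--(\ref{19}), pairing $\psi$ in $L^p_xL^{2+}_t$ against $A^{cf}$ in the dual norm. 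This works when $A^{cf}$ carries the dominant frequency (the paper's Case 1, via (\ref{16''}) and (\ref{20})), because the weight $\langle\xi_1\rangle^{s+\frac18}$ with $s+\frac18>\frac12$ pays for $L^\infty_xL^2_t$ on the $\psi$ factor. It provably fails in the opposite regime $|\xi_1|\ll|\xi_2|$ ($A^{cf}$ low, $\psi$ high): after the output weight $\langle\xi_3\rangle^s$ cancels $\langle\xi_2\rangle^s$, the $\psi$ factor retains \emph{no} spatial weight, so it can only be placed in $L^2_{xt}$ (any $L^p_xL^2_t$ with $p>2$ costs derivatives by (\ref{16}),(\ref{17}) that are unavailable, and the $\langle\xi_1\rangle$-weight cannot be transferred to $\xi_2$ in the separated-frequency regime); with the dual function also in $L^2_{xt}$, the $A^{cf}$ factor would need $L^\infty_{xt}$, i.e.\ $\langle\xi_1\rangle^{1+}\langle\tau_1\rangle^{\frac12+}$, which requires $s+\frac18\ge 1$, i.e.\ $s\ge\frac78$, far above the claimed $s>\frac38$. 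The paper closes this case by a mechanism absent from your proposal: Tao's averaging principle reduces to $\langle\tau_1\rangle\sim 1$ and $\langle|\tau_2|-|\xi_2|\rangle\sim 1$, so that $\tau_2=T+O(1)$ forces $\xi_2$ into the unit-thickness annulus $|\xi_2|=|T|+O(1)$; Schur's test over $T\in\mathbb{Z}$ then reduces (\ref{21}) to the uniform spatial bound (\ref{50}), i.e.\ $\sup_T\|\chi_{|\xi|=T+O(1)}\ast\langle\xi\rangle^{-1-}\|_{L^\infty}\lesssim 1$. You cite Tao \cite{T1}, but this averaging/Schur orthogonality — the actual content of his temporal-gauge argument — is what makes $\langle\xi_1\rangle^{\frac12+}$ suffice, and without it your contraction does not close at $s>\frac38$.

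A secondary inaccuracy: the claimed ``gain of one full derivative from $(-\Delta)^{-1}\partial_j$'' in (\ref{7}) is illusory, since the source there is $(-\Delta)^{-1}\partial_j\partial_k$ applied to the current $\langle\psi,\alpha^\lambda\psi\rangle$, an operator of order zero; $A^{cf}$ is \emph{not} smoother than the current by elliptic smoothing. The $\frac18$-derivative gain putting $A^{cf}$ in $X^{s+\frac18,\frac12+}_{\tau=0}$ is produced by the bilinear estimate (\ref{2.4}) for $\langle\psi,\alpha^j\psi\rangle$, whose proof itself needs a modulation-transfer step ($1\lesssim\langle\tau_2\rangle^{\frac12-}/\langle\xi_2\rangle^{\frac12-}$, distributed onto $\tau_1$ or $\tau_3$) together with Proposition \ref{Prop. 1.1} and (\ref{17}),(\ref{18}); your generic remark that the source terms are handled ``by the same two families of tools'' does not surface this. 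Finally, note that the low-frequency singularity of $(-\Delta)^{-1}$ in (\ref{6}) — the reason for the $|\nabla|^{\epsilon}$ in the statement — requires a separate elementary low-frequency argument in (\ref{2.2}),(\ref{2.3}), not just ``a single bilinear estimate''; this part, however, is routine and your plan would survive it.
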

\begin{Cor}
\label{Cor. 2.1}
The solution satisfies $\psi \in C^0([0,T],H^s)$ , $A^{cf} \in C^0([0,T],H^{s+\frac{1}{8}})$, $|\nabla|^{\epsilon} A^{df} \in C^0([0,T],H^{s+\frac{3}{8}-\epsilon})$ .
\end{Cor}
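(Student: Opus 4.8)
The plan is to deduce the corollary directly from Proposition \ref{Prop. 2.1} via the standard time-continuity embedding for Bourgain-Klainerman-Machedon spaces. The crucial fact is that for each of the three phase functions appearing here one has
$$ X^{s,b}[0,T] \hookrightarrow C^0([0,T],H^s) \qquad \text{for } b > \tfrac{1}{2}, $$
where $X^{s,b}$ stands for $X^{s,b}_{\pm}$, $X^{s,b}_{\tau=0}$ or $X^{s,b}_{|\tau|=|\xi|}$. Since in Proposition \ref{Prop. 2.1} all the relevant functions carry the regularity index $b=\tfrac{1}{2}+>\tfrac{1}{2}$, the three assertions of the corollary follow by applying this embedding to $\psi_{\pm}\in X^{s,\frac{1}{2}+}_{\pm}[0,T]$, to $A^{cf}\in X^{s+\frac{1}{8},\frac{1}{2}+}_{\tau=0}[0,T]$, and to $|\nabla|^{\epsilon}A^{df}\in X^{s+\frac{3}{8}-\epsilon,\frac{1}{2}+}_{|\tau|=|\xi|}[0,T]$ respectively, finally using $\psi=\psi_++\psi_-$.

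It remains to indicate the proof of the embedding. First I would treat the model space $X^{s,b}_{\tau=0}$: for $u$ in this space, Fourier inversion in time gives $\langle\nabla\rangle^s u(t)=\int_{\mathbb{R}}e^{it\tau}\langle\nabla\rangle^s\widehat{u}(\tau,\cdot)\,d\tau$, and Cauchy-Schwarz in $\tau$, using that $\langle\tau\rangle^{-2b}$ is integrable precisely because $b>\tfrac{1}{2}$, yields $\sup_t\|u(t)\|_{H^s}\lesssim\|u\|_{X^{s,b}_{\tau=0}}$; continuity of the map $t\mapsto u(t)\in H^s$ then follows from dominated convergence. This is just the one-dimensional Sobolev embedding $H^b_t(\mathbb{R})\hookrightarrow C^0_t$ carried out after inserting the spatial weight $\langle\xi\rangle^s$.

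The half-wave case $X^{s,b}_{\pm}$ reduces to the model case by setting $v:=e^{\pm it|\nabla|}u$, so that $\widehat{v}(\tau,\xi)=\widehat{u}(\tau\mp|\xi|,\xi)$ and hence $\|v\|_{X^{s,b}_{\tau=0}}=\|u\|_{X^{s,b}_{\pm}}$; since $e^{\mp it|\nabla|}$ is a strongly continuous unitary group on $H^s$, the continuity of $v$ transfers to $u=e^{\mp it|\nabla|}v$. For the wave-Sobolev space $X^{s,b}_{|\tau|=|\xi|}$ I would split $u$ according to the sign of $\tau$ in Fourier space: on $\{\tau\ge 0\}$ one has $\langle|\tau|-|\xi|\rangle=\langle\tau-|\xi|\rangle$ and on $\{\tau<0\}$ one has $\langle|\tau|-|\xi|\rangle=\langle\tau+|\xi|\rangle$, so the two pieces are controlled by the corresponding half-wave norms ($X^{s,b}_{-}$ for $\tau\ge 0$ and $X^{s,b}_{+}$ for $\tau<0$) and are therefore in $C^0 H^s$. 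Finally, since $X^{s,b}[0,T]$ consists of restrictions of global $X^{s,b}$ functions, restricting the resulting $C^0(\mathbb{R},H^s)$ extension to $[0,T]$ gives the claim.

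I do not expect a serious obstacle here: the statement is a routine application of a well-known embedding, and the only point requiring a little care is to check that the same argument applies uniformly to all three phase functions, which is handled by the sign-of-$\tau$ decomposition for the wave space and by the unitary conjugation with $e^{\pm it|\nabla|}$ for the half-waves.
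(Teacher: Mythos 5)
Your proposal is correct and coincides with the paper's (implicit) argument: the corollary is exactly the standard embedding $X^{s,b}[0,T]\hookrightarrow C^0([0,T],H^s)$ for $b>\frac{1}{2}$ applied to the three spaces produced by Proposition \ref{Prop. 2.1}, which the paper omits as routine. Your verification of the embedding --- Cauchy--Schwarz in $\tau$ for the model space $X^{s,b}_{\tau=0}$, conjugation by the unitary group $e^{\pm it|\nabla|}$ for $X^{s,b}_{\pm}$, and the sign-of-$\tau$ splitting for $X^{s,b}_{|\tau|=|\xi|}$ --- is accurate, including the sign bookkeeping $\widehat{v}(\tau,\xi)=\widehat{u}(\tau\mp|\xi|,\xi)$.
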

\begin{proof}[Proof of Proposition \ref{Prop. 2.1}]
We want to apply a Picard iteration. For the Cauchy problem for the Dirac equation 
$$(- i\partial_t \pm|\nabla|)\psi_{\pm} = F_{\pm} \quad , \quad \psi_{\pm}(0) = \psi_{\pm 0} $$
we use the well-known estimate (cf. e.g. \cite{GTV})
$$\|\psi_{\pm}\|_{X^{s,b}_{\pm}[0,T]} \lesssim \|\psi_{\pm 0}\|_{H^s} + T^{1+b'-b} \|F_{\pm}\|_{X^{s,b'}_{\pm}[0,T]} \, , $$
which holds for $0<T \le 1$ , $-\frac{1}{2}<b' \le0 \le b \le b'+1$ , $s \in{\mathbb R}$ .
Thus by standard arguments it suffices to show the following estimates for the right hand side of the Dirac equation (\ref{9}):
\begin{align}
\label{2.1}
\|A_j^{cf} \alpha^j \psi\|_{X^{s,-\frac{1}{2} ++}_{|\tau|=|\xi|}} & \lesssim \|A^{cf}\|_{X^{s+\frac{1}{8},\frac{1}{2}+}_{\tau=0}} \|\psi\|_{X^{s,\frac{1}{2}+}_{|\tau|=|\xi|}} \, , \\
\label{2.2}
 \|A_j^{df} \alpha^j \psi\|_{X^{s,-\frac{1}{2} ++}_{|\tau|=|\xi|}} & \lesssim \||\nabla|^{\epsilon} A_j^{df}\|_{X^{s+\frac{3}{8}-\epsilon,\frac{1}{2}+}_{|\tau|=|\xi|}} \|\psi\|_{X^{s,\frac{1}{2}+}_{|\tau|=|\xi|}} \, , \\
\label{2.3}
\||\nabla|^{\epsilon} A_j^{df}\|_{X^{s+\frac{3}{8}-\epsilon,\frac{1}{2}+}_{|\tau|=|\xi|}} &\lesssim 
\|\psi\|_{X^{s,\frac{1}{2}++}_{|\tau|=|\xi|}}^2 \, .
\end{align} 
Similarly, for the right hand side of (\ref{7}) we need
\begin{equation}
\label{2.4}
\| \langle \psi,\alpha^j \psi \rangle \|_{X^{s+\frac{1}{8},-\frac{1}{2}++}_{\tau=0}} \lesssim \|\psi\|^2_{X^{s,\frac{1}{2}+}_{|\tau|=|\xi|}} \, .
\end{equation}
{\bf Proof of (\ref{2.1}):}
We even prove the estimate with $X^{s,-\frac{1}{2}++}_{|\tau|=|\xi|}$ replaced by
$X^{s,0}_{|\tau|=|\xi|}$ on the left hand side. It reduces to
\begin{align*}
 \int_* \frac{\widehat{u}_1(\tau_1,\xi_1)}{\langle  \xi_1\rangle^{s+\frac{1}{8}} \langle \tau_1 \rangle^{\frac{1}{2}+}} 
\frac{\widehat{u}_2(\tau_2,\xi_2)}{\langle \xi_2 \rangle^s\langle |\tau_2| - |\xi_2|\rangle^{\frac{1}{2}+}}\langle \xi_3 \rangle^s
\widehat{u}_3(\tau_3,\xi_3) d\xi d\tau
\lesssim \prod_{i=1}^3 \|u_i\|_{L^2_{xt}} \, ,
\end{align*}
where * denotes integration over $\xi = (\xi_1,\xi_2,\xi_3) , \tau=(\tau_1,\tau_2,\tau_3)$ with $\xi_1+\xi_2+\xi_3=0$ and $\tau_1+\tau_2+\tau_3 =0$. We assume here and in the following without loss of generality that the Fourier transforms are nonnnegative. \\
Case 1: $|\xi_1| \ge |\xi_2|$ $\Rightarrow$ $\langle \xi_3 \rangle^s \lesssim \langle \xi_1 \rangle^s $ .\\
It suffices to show
\begin{align*}
 \int_* \frac{\widehat{u}_1(\tau_1,\xi_1)}{ \langle \tau_1 \rangle^{\frac{1}{2}+}} 
\frac{\widehat{u}_2(\tau_2,\xi_2)}{\langle \xi_2 \rangle^{s+\frac{1}{8}}\langle |\tau_2| - |\xi_2|\rangle^{\frac{1}{2}+}}
\widehat{u}_3(\tau_3,\xi_3) d\xi d\tau 
\lesssim \prod_{i=1}^3 \|u_i\|_{L^2_{xt}} \, .
\end{align*}
This follows under the assumption $s>\frac{3}{8}$ from the estimate
\begin{align}
\nonumber
\Big|\int v_1 v_2 v_3 dx dt \Big| & \lesssim \|v_1\|_{L^2_x L^{\infty}_t} \|v_2\|_{L^{\infty}_x L^2_t} \|v_3\|_{L^2_x L^2_t} \\
\label{20}
&\lesssim \|v_1\|_{X^{0,\frac{1}{2}+}_{\tau=0}} \|v_2\|_{X^{\frac{1}{2}+,\frac{1}{2}+}_{|\tau|=|\xi|}} 
\|v_3\|_{X^{0,0}_{|\tau|=|\xi|}} \, ,
\end{align}
where we used (\ref{16''}).\\
Case 2: $|\xi_2| \ge |\xi_1|$ $\Rightarrow$ $\langle \xi_3 \rangle^s \lesssim \langle \xi_2 \rangle^s $.\\
In this case the desired estimate follows from
\begin{equation}
\label{21}
\int_* m(\xi_1,\xi_2,\xi_3,\tau_1,\tau_2,\tau_3) \widehat{u}_1(\xi_1,\tau_1)  \widehat{u}_2(\xi_2,\tau_2) \widehat{u}_3(\xi_3,\tau_3) d\xi d\tau \lesssim \prod_{i=1}^3 \|u_i\|_{L^2_{xt}} \, , 
\end{equation}
where 
$$ m = \frac{1}{ \langle |\tau_2| - |\xi_2|\rangle^{\frac{1}{2}+}  \langle \xi_1 \rangle^{\frac{1}{2}+}\langle \tau_1 \rangle^{\frac{1}{2}+}} \, .$$
The following argument is closely related to the proof of a similar estimate in  \cite{T1}.\\
By two applications of the averaging principle (\cite{T}, Prop. 5.1) we may replace $m$ by
$$ m' = \frac{ \chi_{||\tau_2|-|\xi_2||\sim 1} \chi_{|\tau_1| \sim 1}}{ \langle \xi_1 \rangle^{\frac{1}{2}+}} \, . $$
Let now $\tau_2$ be restricted to the region $\tau_2 =T + O(1)$ for some integer $T$. Then $\tau_3$ is restricted to $\tau_3 = -T + O(1)$, because $\tau_1 + \tau_2 + \tau_3 =0$, and $\xi_2$ is restricted to $|\xi_2| = |T| + O(1)$. The $\tau_3$-regions are essentially disjoint for $T \in {\mathbb Z}$ and similarly the $\tau_2$-regions. Thus by Schur's test (\cite{T}, Lemma 3.11) we only have to show
\begin{align*}
 &\sup_{T \in {\mathbb Z}} \int_* \frac{ \chi_{\tau_3=-T+O(1)} \chi_{\tau_2=T+O(1)} \chi_{|\tau_1|\sim 1} \chi_{|\xi_2|=|T|+O(1)}}{\langle \xi_1 \rangle^{\frac{1}{2}+}}\cdot \\
 & \hspace{14em} \cdot\widehat{u}_1(\xi_1,\tau_1) \widehat{u}_2(\xi_2,\tau_2)
\widehat{u}_3(\xi_3,\tau_3) d\xi d\tau \lesssim \prod_{i=1}^3 \|u_i\|_{L^2_{xt}} \, . 
\end{align*}
The $\tau$-behaviour of the integral is now trivial, thus we reduce to
\begin{equation}
\label{50}
\sup_{T \in {\mathbb N}} \int_{\sum_{i=1}^3 \xi_i =0}  \frac{ \chi_{|\xi_2|=T+O(1)}}{ \langle \xi_1 \rangle^{\frac{1}{2}+}} \widehat{f}_1(\xi_1)\widehat{f}_2(\xi_2)\widehat{f}_3(\xi_3)d\xi \lesssim \prod_{i=1}^3 \|f_i\|_{L^2_x} \, .
\end{equation}
 An elementary calculation shows that
\begin{align*}
L.H.S. \, of \,  (\ref{50})
\lesssim \sup_{T \in{\mathbb N}} \| \chi_{|\xi|=T+O(1)} \ast \langle \xi \rangle^{-1-}\|^{\frac{1}{2}}_{L^{\infty}(\mathbb{R}^2)} \prod_{i=1}^3 \|f_i\|_{L^2_x} \lesssim \prod_{i=1}^3 \|f_i\|_{L^2_x}\, ,
\end{align*}
so that the desired estimate follows.\\
{\bf Proof of (\ref{2.4}):} This reduces to
\begin{align*}
 \int_* \frac{\widehat{u}_1(\tau_1,\xi_1)}{\langle  \xi_1\rangle^s \langle |\tau_1|-|\xi_1| \rangle^{\frac{1}{2}+}} 
\frac{\widehat{u}_2(\tau_2,\xi_2)}{\langle \xi_2 \rangle^s \langle |\tau_2| - |\xi_2|\rangle^{\frac{1}{2}+}} \frac{\langle \xi_3 \rangle^{s+\frac{1}{8}}
\widehat{u}_3(\tau_3,\xi_3)}{\langle \tau_3 \rangle^{\frac{1}{2}-}} d\xi d\tau
\lesssim \prod_{i=1}^3 \|u_i\|_{L^2_{xt}} \, .
\end{align*}
Assuming without loss of generality $|\xi_1| \le |\xi_2|$ we have to show
\begin{align*}
\int_* \frac{\widehat{u}_1(\tau_1,\xi_1)}{\langle  \xi_1\rangle^s \langle |\tau_1|-|\xi_1| \rangle^{\frac{1}{2}+}} 
\frac{\widehat{u}_2(\tau_2,\xi_2)}{ \langle |\tau_2| - |\xi_2|\rangle^{\frac{1}{2}+}} \frac{\langle \xi_3 \rangle^{\frac{1}{8}}
\widehat{u}_3(\tau_3,\xi_3)}{\langle \tau_3 \rangle^{\frac{1}{2}-}} d\xi d\tau 
\lesssim \prod_{i=1}^3 \|u_i\|_{L^2_{xt}} \, .
\end{align*}
Case 1: $|\tau_2| \ll |\xi_2|$.\\
We reduce to
\begin{align*}
 \int_* \frac{\widehat{u}_1(\tau_1,\xi_1)}{\langle  \xi_1\rangle^s \langle |\tau_1|-|\xi_1| \rangle^{\frac{1}{2}+}} 
\frac{\widehat{u}_2(\tau_2,\xi_2)}{ \langle  \xi_2\rangle^{\frac{3}{8}+}} \frac{
\widehat{u}_3(\tau_3,\xi_3)}{\langle \tau_3 \rangle^{\frac{1}{2}-}} d\xi d\tau 
\lesssim \prod_{i=1}^3 \|u_i\|_{L^2_{xt}} \, .
\end{align*}
This follows from
\begin{align*}
\Big|\int v_1 v_2 v_3 dx dt \Big| & \lesssim \|v_1\|_{L^6_x L^{2+}_t} \|v_2\|_{L^3_x L^2_t} \|v_3\|_{L^2_x L^{\infty -}_t} \\
&\lesssim \|v_1\|_{X^{\frac{1}{6}+,\frac{1}{2}+}_{|\tau|=|\xi|}} \|v_2\|_{X^{\frac{1}{3},0}_{|\tau|=|\xi|}} 
\|v_3\|_{X^{0,\frac{1}{2}-}_{\tau =0}} \, ,
\end{align*}
where we used (\ref{18}) for the first factor and Sobolev for the others. Obviously here is some headroom left.\\
Case 2: $|\tau_2| \gtrsim |\xi_2|$. 
In this case we use $\tau_1 + \tau_2 + \tau_3 =0$ to estimate
$$ 1 \lesssim \frac{\langle \tau_2 \rangle^{\frac{1}{2}-}}{\langle \xi_2 \rangle^{\frac{1}{2}-}} \lesssim \frac{\langle \tau_1 \rangle^{\frac{1}{2}-}}{\langle \xi_2 \rangle^{\frac{1}{2}-}} + \frac{\langle \tau_3 \rangle^{\frac{1}{2}-}}{\langle \xi_2 \rangle^{\frac{1}{2}-}} \, . $$
2.1: If the second term on the right hand side is dominant we have to show, using also $\langle \xi_3 \rangle^{\frac{1}{8}} \lesssim \langle \xi_2 \rangle^{\frac{1}{8}}$ :
\begin{align*}
 \int_* \frac{\widehat{u}_1(\tau_1,\xi_1)}{\langle \xi_1 \rangle^s \langle |\tau_1| - |\xi_1|\rangle^{\frac{1}{2}+} } 
\frac{\widehat{u}_2(\tau_2,\xi_2)}{\langle \xi_2\rangle^{\frac{3}{8}-} \langle |\tau_2|-|\xi_2|\rangle^{\frac{1}{2}+}}
\widehat{u}_3(\tau_3,\xi_3) d\xi d\tau
\lesssim\prod_{i=1}^3 \|u_i\|_{L^2_{xt}} \, ,
\end{align*}
which follows for $s>\frac{3}{8}$ by Prop. \ref{Prop. 1.1}.\\
2.2: If the first term on the right hand side is dominant we consider two subcases.\\
2.2.1: $|\tau_1| \lesssim |\xi_1|$ . 
We reduce to
\begin{align*}
 \int_* \frac{\widehat{u}_1(\tau_1,\xi_1) \langle \xi_1 \rangle^{\frac{1}{2}-s-}}{\langle |\tau_1| - |\xi_1|\rangle^{\frac{1}{2}+} } 
\frac{\widehat{u}_2(\tau_2,\xi_2)}{\langle \xi_2\rangle^{\frac{3}{8}-} \langle |\tau_2|-|\xi_2|\rangle^{\frac{1}{2}+}}
\frac{\widehat{u}_3(\tau_3,\xi_3)}{\langle \tau_3 \rangle^{\frac{1}{2}-}} d\xi d\tau
\lesssim\prod_{i=1}^3 \|u_i\|_{L^2_{xt}} \, .
\end{align*}
Using $|\xi_2| \ge |\xi_1|$ and $ s > \frac{3}{8} $ it suffices to show 
\begin{align*}
\int_* \frac{\widehat{u}_1(\tau_1,\xi_1)}{ \langle \xi_1 \rangle^{\frac{1}{8}+}\langle |\tau_1| - |\xi_1|\rangle^{\frac{1}{2}+} } 
\frac{\widehat{u}_2(\tau_2,\xi_2)}{\langle \xi_2\rangle^{\frac{1}{8}+} \langle |\tau_2|-|\xi_2|\rangle^{\frac{1}{2}+}}
\frac{\widehat{u}_3(\tau_3,\xi_3)}{\langle \tau_3 \rangle^{\frac{1}{2}-}} d\xi d\tau
\lesssim\prod_{i=1}^3 \|u_i\|_{L^2_{xt}} \, .
\end{align*}
This follows from
\begin{align*}
\Big| \int v_1 v_2 v_3 dx dt \Big| & \lesssim \|v_1\|_{L^4_x L^{2+}_t} 
\|v_2\|_{L^4_x L^{2+}_t} \|v_3\|_{L^2_x L^{\infty-}_t} \\
&\lesssim \|v_1\|_{X^{\frac{1}{8}+,\frac{3}{8}+}_{|\tau|=|\xi|}} \|v_2\|_{X^{\frac{1}{8}+,\frac{3}{8}+}_{|\tau|=|\xi|}    } \|v_3\|_{X^{0,\frac{1}{2}-}_{\tau =0}} \, , \\
\end{align*}
where we used (\ref{17}). \\
2.2.2: $|\tau_1| \gg |\xi_1|$ $\Rightarrow$ $\langle |\tau_1| - |\xi_1| \rangle^{\frac{1}{2}-} \sim \langle \tau_1 \rangle^{\frac{1}{2}-}$ . \\
We have to show
\begin{align*}
 \int_* \frac{\widehat{u}_1(\tau_1,\xi_1)}{ \langle \xi_1 \rangle^s} 
\frac{\widehat{u}_2(\tau_2,\xi_2)}{\langle \xi_2\rangle^{\frac{3}{8}-} \langle |\tau_2|-|\xi_2|\rangle^{\frac{1}{2}+}}
\frac{\widehat{u}_3(\tau_3,\xi_3)}{\langle \tau_3 \rangle^{\frac{1}{2}-}} d\xi d\tau
\lesssim\prod_{i=1}^3 \|u_i\|_{L^2_{xt}} \, .
\end{align*}
This follows from
\begin{align*}
\Big| \int v_1 v_2 v_3 dx dt \Big| & \lesssim \|v_1\|_{L^3_x L^2_t} 
\|v_2\|_{L^6_x L^{2+}_t} \|v_3\|_{L^2_x L^{\infty-}_t} \\
&\lesssim \|v_1\|_{X^{\frac{1}{3},0}_{|\tau|=|\xi|}} \|v_2\|_{X^{\frac{1}{6}+,\frac{1}{2}+}_{|\tau|=|\xi|}    } \|v_3\|_{X^{0,\frac{1}{2}-}_{\tau =0}} \, , \\
\end{align*}
where we used Sobolev for the first and last factor and (\ref{18}) for the second one. This completes the proof of (\ref{2.4}).\\
{\bf Proof of (\ref{2.3}):} We distinguish between low and high frequencies of $A_j^{df}$. For high frequencies, i.e. , $supp \, ({\mathcal F} A_j^{df}) \subset \{|\xi| \ge 1 \}$ , we obtain by (\ref{6}) and Prop. \ref{Prop. 1.1} for $s > \frac{3}{8}$ :
$$ \| |\nabla|^{\epsilon} A_j^{df} \|_{X^{s+\frac{3}{8}-\epsilon,\frac{1}{2}+}_{|\tau|=|\xi|}} \lesssim \| \langle \psi,\psi \rangle \|_{X^{s-\frac{5}{8},\frac{1}{2}+}_{|\tau|=|\xi|}} \lesssim \|\psi\|^2_{X^{s,\frac{1}{2}++}_{|\tau|=|\xi|}} \, . $$
In the low frequency case $|\xi_3| \le 1$ , where  $\langle \xi_1 \rangle \sim \langle \xi_2 \rangle$ , it suffices to show
\begin{align*}
 &\int_* \frac{\widehat{u}_1(\tau_1,\xi_1)}{ \langle \xi_1 \rangle^s \langle |\tau_1|-|\xi_1|\rangle^{\frac{1}{2}++}} 
\frac{\widehat{u}_2(\tau_2,\xi_2)}{\langle \xi_2\rangle^s \langle |\tau_2|-|\xi_2|\rangle^{\frac{1}{2}++}} \cdot\\
& \hspace{5em} \cdot\frac{\widehat{u}_3(\tau_3,\xi_3) \langle |\tau_3|-|\xi_3| \rangle^{\frac{1}{2}+} \langle \xi_3 \rangle^{s+\frac{3}{8}-\epsilon} \chi_{\{|\xi_3| \le 1 \}}}{|\xi_3|^{1-\epsilon}} 
d\xi d\tau
\lesssim\prod_{i=1}^3 \|u_i\|_{L^2_{xt}} \, .
\end{align*}
Assuming without loss of generality $\langle \tau_2 \rangle \le \langle \tau_1 \rangle$, we obtain
$ \langle |\tau_3|-|\xi_3| \rangle^{\frac{1}{2}+} \sim \langle \tau_3 \rangle^{\frac{1}{2}+} \lesssim  \langle \tau_1 \rangle^{\frac{1}{2}+} + \langle \tau_2 \rangle^{\frac{1}{2}+} \lesssim  \langle \tau_1 \rangle^{\frac{1}{2}+} $ .\\
If $|\tau_1| \gg |\xi_1|$ or $|\tau_1| \ll |\xi_1|$ , it suffices to show
\begin{align*}
 \int_* \frac{\widehat{u}_1(\tau_1,\xi_1)}{ \langle \xi_1 \rangle^s} 
\frac{\widehat{u}_2(\tau_2,\xi_2)}{\langle \xi_2\rangle^s \langle |\tau_2|-|\xi_2|\rangle^{\frac{1}{2}++}}
\frac{\widehat{u}_3(\tau_3,\xi_3) \chi_{\{|\xi_3| \le 1 \}}}{|\xi_3|^{1-\epsilon}} d\xi d\tau
\lesssim\prod_{i=1}^3 \|u_i\|_{L^2_{xt}} \, .
\end{align*}
This follows from
$$\Big|\int v_1 v_2 v_3 dx dt \Big| \lesssim \|v_1\|_{L^2_{xt}} \|v_2\|_{L^{\infty}_t L^2_x} \|v_3\|_{L^2_t L^{\infty}_x} \, , $$
which gives the desired result using $\dot{H}^{1-\epsilon}_x \hookrightarrow  L^{\infty}_x$ for low frequencies.\\
If $|\tau_1| \sim |\xi_1|$ , we use $\langle \xi_1 \rangle \sim \langle \xi_2 \rangle$ and reduce to
\begin{align*}
 \int_* \widehat{u}_1(\tau_1,\xi_1) 
\frac{\widehat{u}_2(\tau_2,\xi_2)}{\langle \xi_2 \rangle^{2s-\frac{1}{2}-} \langle \langle |\tau_2|-|\xi_2|\rangle^{\frac{1}{2}++}}
\frac{\widehat{u}_3(\tau_3,\xi_3) \chi_{\{|\xi_3| \le 1 \}}}{|\xi_3|^{1-\epsilon}} 
\lesssim\prod_{i=1}^3 \|u_i\|_{L^2_{xt}} \, ,
\end{align*}
which can be shown as before. We remark that we only used $s>\frac{1}{4}$ in the low frequency case.\\
{\bf Proof of (\ref{2.2}):} We even prove the estimate with $X^{s,-\frac{1}{2}++}_{|\tau|=|\xi|}$ replaced by $X^{s,0}_{|\tau|=|\xi|}$ on the left hand side. For high frequencies of $A^{df}_j$ we have to show
$$ \|A^{df}_j \alpha^j \psi \|_{X^{s,0}_{|\tau|=|\xi|}} \lesssim \|A^{df}\|_{X^{s+\frac{3}{8},\frac{1}{2}+}_{|\tau|=|\xi|}} \|\psi\|_{X^{s,\frac{1}{2}+}_{|\tau|=|\xi|}} \, , $$
which follows by Proposition \ref{Prop. 1.1}. For the low frequency case of $A^{df}_j$ it suffices to show
\begin{align*}
 \int_* \frac{\widehat{u}_1(\tau_1,\xi_1)}{ \langle \xi_1 \rangle^s \langle |\tau_1|-|\xi_1|\rangle^{\frac{1}{2}+}} 
\widehat{u}_2(\tau_2,\xi_2) \langle \xi_2\rangle^s 
\frac{\widehat{u}_3(\tau_3,\xi_3)  \chi_{\{|\xi_3| \le 1 \}}}{|\xi_3|^{\epsilon}\langle \|\tau_3|-|\xi_3| \rangle^{\frac{1}{2}+} \langle \xi_3 \rangle^{s+\frac{3}{8}-\epsilon}} d\xi d\tau\\
\lesssim\prod_{i=1}^3 \|u_i\|_{L^2_{xt}} \, .
\end{align*}
Using $\langle \xi_1 \rangle \sim \langle \xi_2 \rangle$ and $\langle \xi_3 \rangle \sim 1$ and $\dot{H}^{\epsilon}_x \hookrightarrow L^{\infty}_x$ for low frequencies this easily follows from the estimate
$$\Big|\int v_1 v_2 v_3 dx dt \Big| \lesssim \|v_1\|_{L^{\infty}_t L^2_x} \|v_2\|_{L^2_t L^2_x} \|v_3\|_{L^2_t L^{\infty}_x} \, . $$

This completes the proof of (\ref{2.1})-(\ref{2.4}). The property $\psi_{\pm} \in X^{s,1}_{\pm}[0,T]$ follows immediately from the proof of (\ref{2.1}) and (\ref{2.2}).
\end{proof}

\section{Proof of Theorem \ref{Theorem 1.2}}
\begin{proof}
Assume $s > \frac{19}{40}$ , say $s = \frac{19}{40} + \delta$ with $1 \gg \delta > 0$ . Let $\psi \in C^0([0,T],H^s)$ , $A_j \in C^0([0,T],H^{s+\frac{1}{8}})$ . \\
{\bf Claim 1:} $\psi_{\pm} \in X_{\pm}^{\frac{1}{4}+\alpha,\frac{1}{2}+}[0,T] $ , where $\alpha = \frac{1}{40} + \frac{3}{2}\delta-$ . \\
By Sobolev's multiplication law we obtain
$$ \|A_j \alpha^j \psi_{\pm}\|_{L^2([0,T],H^{2s-\frac{7}{8}})} \lesssim \|A\|_{C^0([0,T],H^{s+\frac{1}{8}})} \|\psi\|_{C^0([0,T],H^s)} T^{\frac{1}{2}} < \infty \, . $$ 
Thus $\psi_{\pm} \in X^{2s-\frac{7}{8},1}_{\pm}[0,T]$ . Interpolation with $\psi_{\pm} \in X_{\pm}^{s,0}[0,T] \subset C^0([0,T],H^s)$ gives $\psi_{\pm} \in X_{\pm}^{\frac{1}{4}+\frac{3}{2}s - \frac{11}{16}-,\frac{1}{2}+}[0,T] = X_{\pm}^{\frac{1}{4}+\alpha,\frac{1}{2}+}[0,T] $ .

We now iteratively improve the regularity of $\psi_{\pm}$ , $A^{cf}$ and $A^{df}$ in order to end up in a class where uniqueness holds by Theorem \ref{Theorem 1.1}.

Let us assume that $\psi_{\pm} \in X_{\pm}^{\min(\frac{1}{4}+\alpha_k,s),\frac{1}{2}+}[0,T]$ with $\alpha_k = \frac{1}{40} +\big(\frac{3}{2}\big)^k \delta - $ for some $k \in {\mathbb N}$. This was just shown for $k=1$ . If $\frac{1}{4}+\alpha_k \ge s$ , we obtain by (\ref{2.3}) and (\ref{2.4}) $|\nabla|^{\epsilon} A^{df}_j \in X^{s+\frac{3}{8}-\epsilon,\frac{1}{2}+}_{|\tau|=|\xi|}$ and $A^{cf}_j \in X^{s+\frac{1}{8},\frac{1}{2}+}_{\tau=0}[0,T]$ , so that uniqueness follows from Theorem \ref{Theorem 1.1}. 

Otherwise we now prove \\
{\bf Claim 2:} $A^{cf}_j \in X^{\min(\frac{1}{4}+2\alpha_k-,s+\frac{1}{8}),\frac{1}{2}+}_{\tau=0}[0,T]$ . \\
This reduces to
$$ \|\langle \psi,\alpha^j \psi \rangle \|_{X^{\frac{1}{4}+2\alpha_k-,-\frac{1}{2}++}_{\tau=0}} \lesssim \|\psi\|_{X^{\frac{1}{4}+ \alpha_k,\frac{1}{2}+}_{|\tau|=|\xi|}}^2 \, , $$
which is equivalent to
\begin{align*}
 \int_* \frac{\widehat{u}_1(\tau_1,\xi_1)}{\langle  \xi_1\rangle^{\frac{1}{4}+\alpha_k} \langle |\tau_1|-|\xi_1| \rangle^{\frac{1}{2}+}} 
\frac{\widehat{u}_2(\tau_2,\xi_2)}{\langle \xi_2 \rangle^{\frac{1}{4}+\alpha_k}\langle |\tau_2| - |\xi_2|\rangle^{\frac{1}{2}+}} \frac{\langle \xi_3 \rangle^{\frac{1}{4}+2\alpha_k -} 
\widehat{u}_3(\tau_3,\xi_3)}{\langle \tau_3 \rangle^{\frac{1}{2}-}} d\xi d\tau \\
\lesssim \prod_{i=1}^3 \|u_i\|_{L^2_{xt}} \, .
\end{align*}
Assuming without loss of generality $|\xi_1| \le |\xi_2|$ we reduce to
\begin{align*}
 \int_* \frac{\widehat{u}_1(\tau_1,\xi_1)}{\langle  \xi_1\rangle^{\frac{1}{4}+\alpha_k} \langle |\tau_1|-|\xi_1| \rangle^{\frac{1}{2}+}} 
\frac{\widehat{u}_2(\tau_2,\xi_2)}{\langle |\tau_2| - |\xi_2|\rangle^{\frac{1}{2}+}} \frac{\langle \xi_3 \rangle^{\alpha_k -} 
\widehat{u}_3(\tau_3,\xi_3)}{\langle \tau_3 \rangle^{\frac{1}{2}-}} d\xi d\tau 
\lesssim \prod_{i=1}^3 \|u_i\|_{L^2_{xt}} \, .
\end{align*}
Case 1: $|\tau_2| \ll |\xi_2|$ . \\
The left hand side is bounded by
\begin{align*}
& \int_* \frac{\widehat{u}_1(\tau_1,\xi_1)}{\langle  \xi_1\rangle^{\frac{1}{4}+\alpha_k} \langle |\tau_1|-|\xi_1| \rangle^{\frac{1}{2}+}} 
\frac{\widehat{u}_2(\tau_2,\xi_2)}{\langle \xi_2\rangle^{\frac{1}{2}-\alpha_k+}} \frac{
\widehat{u}_3(\tau_3,\xi_3)}{\langle \tau_3 \rangle^{\frac{1}{2}-}} d\xi d\tau \\
& \lesssim  \int_* \frac{\widehat{u}_1(\tau_1,\xi_1)}{\langle  \xi_1\rangle^{\frac{3}{4}} \langle |\tau_1|-|\xi_1| \rangle^{\frac{1}{2}+}} 
\widehat{u}_2(\tau_2,\xi_2) \frac{
\widehat{u}_3(\tau_3,\xi_3)}{\langle \tau_3 \rangle^{\frac{1}{2}-}} d\xi d\tau
\lesssim \prod_{i=1}^3 \|u_i\|_{L^2_{xt}} \, ,
\end{align*}
because by (\ref{16'''})
$$ \Big|\int v_1 v_2 v_3 dx dt \Big| \lesssim \|v_1\|_{L^{\infty}_x L^{2+}_t} \|v_2\|_{L^2_{xt}} \|v_3\|_{L^2_x L^{\infty -}_t} \lesssim \|v_1\|_{X^{\frac{1}{2}+,\frac{1}{2}+}_{|\tau|=|\xi|}} \|v_2\|_{X^{0,0}_{|\tau|=|\xi|}} \|v_3\|_{^{0,\frac{1}{2}-}_{\tau =0}}   $$
Case 2: $|\tau_2| \gtrsim |\xi_2|$ . \\
In this case we obtain
$$ 1 \lesssim \frac{\langle \tau_2 \rangle^{\frac{1}{2}-}}{\langle \xi_2 \rangle^{\frac{1}{2}-}} \lesssim \frac{\langle \tau_1 \rangle^{\frac{1}{2}-}}{\langle \xi_2 \rangle^{\frac{1}{2}-}} + \frac{\langle \tau_3 \rangle^{\frac{1}{2}-}}{\langle \xi_2 \rangle^{\frac{1}{2}-}} \, . $$
2.1: Concerning the second term we use $\langle \xi_3 \rangle \lesssim \langle \xi_2 \rangle$ and reduce to
\begin{align*}
 \int_* \frac{\widehat{u}_1(\tau_1,\xi_1)}{\langle  \xi_1\rangle^{\frac{1}{4}+\alpha_k} \langle |\tau_1|-|\xi_1| \rangle^{\frac{1}{2}+}} 
\frac{\widehat{u}_2(\tau_2,\xi_2)}{\langle \xi_2 \rangle^{\frac{1}{2}-\alpha_k+}\langle |\tau_2| - |\xi_2|\rangle^{\frac{1}{2}+}}  
\widehat{u}_3(\tau_3,\xi_3) d\xi d\tau 
\lesssim \prod_{i=1}^3 \|u_i\|_{L^2_{xt}} \, ,
\end{align*}
which follows from Proposition \ref{Prop. 1.1}. \\
2.2: Concerning the first term we consider two subcases. \\
2.2.1: $|\tau_1| \lesssim |\xi_1|$ . \\
This follows from
\begin{align*}
& \int_* \frac{\widehat{u}_1(\tau_1,\xi_1) \langle  \xi_1\rangle^{\frac{1}{4}-\alpha_k-}} {\langle |\tau_1|-|\xi_1| \rangle^{\frac{1}{2}+}} 
\frac{\widehat{u}_2(\tau_2,\xi_2)}{\langle \xi_2 \rangle^{\frac{1}{2}-\alpha_k}\langle |\tau_2|-|\xi_2| \rangle^{\frac{1}{2}+}}  
\frac{\widehat{u}_3(\tau_3,\xi_3)}{\langle \tau_3 \rangle^{\frac{1}{2}-}} d\xi d\tau \\
&\lesssim \int_* \frac{\widehat{u}_1(\tau_1,\xi_1)}{ \langle  \xi_1\rangle^{\frac{1}{8}+} \langle |\tau_1|-|\xi_1| \rangle^{\frac{1}{2}+}} 
\frac{\widehat{u}_2(\tau_2,\xi_2)}{\langle \xi_2 \rangle^{\frac{1}{8}+} \langle |\tau_2|-|\xi_2| \rangle^{\frac{1}{2}+}}  
\frac{\widehat{u}_3(\tau_3,\xi_3)}{\langle \tau_3 \rangle^{\frac{1}{2}-}} d\xi d\tau
\lesssim \prod_{i=1}^3 \|u_i\|_{L^2_{xt}} \, .
\end{align*}
Here we used $|\xi_1| \le |\xi_2|$ and $\alpha_k < \frac{1}{4}$, and the last step uses (\ref{17}) just as in the proof of (\ref{2.4}). \\
2.2.2: $|\tau_1| \gg |\xi_1|$  $\Longrightarrow$ $\langle |\tau_1|-|\xi_1| \rangle^{\frac{1}{2}-} \sim \langle \tau_1 \rangle^{\frac{1}{2}-}$ . \\
We reduce to
\begin{align*}
&\int_* \frac{\widehat{u}_1(\tau_1,\xi_1)}{ \langle  \xi_1\rangle^{\frac{1}{4}+\alpha_k}}  
\frac{\widehat{u}_2(\tau_2,\xi_2)}{\langle \xi_2 \rangle^{\frac{1}{2}-\alpha_k}\langle |\tau_2|-|\xi_2| \rangle^{\frac{1}{2}+}}  
\frac{\widehat{u}_3(\tau_3,\xi_3)}{\langle \tau_3 \rangle^{\frac{1}{2}-}} d\xi d\tau  
\lesssim \prod_{i=1}^3 \|u_i\|_{L^2_{xt}} \, .
\end{align*}
This is implied by
\begin{align*}
\Big|\int v_1 v_2 v_3 dx dt \Big| &
\lesssim \|v_1\|_{L^{\frac{8}{3}}_x L^2_t} \|v_2\|_{L^8_x L^{2+}_t} \|v_3\|_{L^2_x L^{\infty -}_t} \\
&\lesssim \|v_1\|_{X^{\frac{1}{4},0}_{|\tau|=|\xi|}} \|v_2\|_{X^{\frac{1}{4}+,\frac{1}{2}+}_{|\tau|=|\xi|}} \|v_3\|_{X^{0,\frac{1}{2}-}_{\tau =0}}  \, , 
\end{align*}
where we used Sobolev, (\ref{19}) and $\alpha_k < \frac{1}{4}$ . \\
{\bf Claim 3:} $|\nabla|^{\epsilon} A^{df}_j \in X^{\frac{1}{2}+2\alpha_k - \epsilon -,\frac{1}{2}+}_{|\tau|=|\xi|} $ . \\
For high frequencies we obtain
$$ \||\nabla|^{\epsilon} A^{df}_j\|_{X^{\frac{1}{2} +2\alpha_k-\epsilon-,\frac{1}{2}+}} \lesssim \| \langle \psi,\psi \rangle\|_{X^{-\frac{1}{2} +2\alpha_k-,\frac{1}{2}+}} \lesssim \|\psi\|^2_{X^{\frac{1}{4} +\alpha_k,\frac{1}{2}++}}  $$
by use of Proposition \ref{Prop. 1.1}. \\
The low frequency case can be handled as in the proof of (\ref{2.3}).

If after such an iteration step we obtained an $\alpha_k$ such that $\alpha_k > \frac{1}{8}$ , we obtain by  (\ref{2.1}) and (\ref{2.2}) combined with claim 2 and claim 3 the regularity $\psi_{\pm} \in X^{\frac{1}{4}+\alpha_k,\frac{1}{2}+}_{\pm}[0,T] \subset X^{\frac{3}{8}+,\frac{1}{2}+}_{\pm}[0,T]$ ,  $|\nabla|^{\epsilon} A^{df}_j \in X^{\frac{1}{2}+2\alpha_k - \epsilon -,\frac{1}{2}+}_{|\tau|=|\xi|}  \subset X^{\frac{3}{4}-\epsilon,\frac{1}{2}+}_{|\tau|=|\xi|}[0,T] $ and $A^{cf}_j \in X^{\frac{1}{4}+2\alpha_k-,\frac{1}{2}+}_{\tau=0}[0,T] \subset X^{\frac{1}{2}+,\frac{1}{2}+}_{\tau =0}[0,T]$ , where uniqueness holds by Theorem \ref{Theorem 1.1} and we are done.

If however $\alpha_k \le \frac{1}{8}$ we need a further iteration step. \\
{\bf Claim 4:} The following estimate holds:
$$\|A^{cf}_j \alpha^j \psi_{\pm}\|_{L^2_t(H_x^{3\alpha_k --})} \lesssim \|A^{cf}\|_{X^{\frac{1}{4}+2\alpha_k-,\frac{1}{2}+}_{\tau=0}} \|\psi_{\pm}\|_{X^{\frac{1}{4}+\alpha_k,\frac{1}{2}+}_{\pm}} \, . $$
This reduces to
\begin{align*}
& \int_* \frac{\widehat{u}_1(\tau_1,\xi_1)}{ \langle  \xi_1\rangle^{\frac{1}{4}+2\alpha_k-}\langle \tau_1 \rangle^{\frac{1}{2}+}} 
\frac{\widehat{u}_2(\tau_2,\xi_2)}{\langle \xi_2 \rangle^{\frac{1}{4}+\alpha_k}\langle |\tau_2|-|\xi_2| \rangle^{\frac{1}{2}+}} 
\widehat{u}_3(\tau_3,\xi_3) \langle \xi_3 \rangle^{3\alpha_k--} d\xi d\tau \\
& \hspace{25em}
\lesssim \prod_{i=1}^3 \|u_i\|_{L^2_{xt}} \, .  
\end{align*}
Case 1: $|\xi_1| \ge |\xi_2|$ $\Rightarrow$ $\langle \xi_3 \rangle \lesssim \langle \xi_1 \rangle$ . \\
Using $\alpha_k \le \frac{1}{4}$ it suffices to show
\begin{align*}
& \int_* \frac{\widehat{u}_1(\tau_1,\xi_1)}{\langle \tau_1 \rangle^{\frac{1}{2}+}} 
\frac{\widehat{u}_2(\tau_2,\xi_2)}{\langle \xi_2 \rangle^{\frac{1}{2}+}\langle |\tau_2|-|\xi_2| \rangle^{\frac{1}{2}+}} 
\widehat{u}_3(\tau_3,\xi_3) d\xi d\tau  
\lesssim \prod_{i=1}^3 \|u_i\|_{L^2_{xt}} \, ,  
\end{align*}
which holds by (\ref{20}). \\
Case 2: $|\xi_2| \ge |\xi_1|$ $\Rightarrow$ $\langle \xi_3 \rangle \lesssim \langle \xi_2 \rangle$ . \\
Here we use $\alpha_k \le \frac{1}{8}$. We only have to show
\begin{align*}
& \int_* \frac{\widehat{u}_1(\tau_1,\xi_1)}{\langle \xi_1 \rangle^{\frac{1}{2}+} \langle \tau_1 \rangle^{\frac{1}{2}+}} 
\frac{\widehat{u}_2(\tau_2,\xi_2)}{\langle |\tau_2|-|\xi_2| \rangle^{\frac{1}{2}+}} 
\widehat{u}_3(\tau_3,\xi_3) d\xi d\tau  
\lesssim \prod_{i=1}^3 \|u_i\|_{L^2_{xt}} \, ,  
\end{align*}
which follows from (\ref{21}). \\
{\bf Claim 5:} The following estimate holds:
$$\|A^{df}_j \alpha^j \psi_{\pm}\|_{L^2_t(H_x^{3\alpha_k --})} \lesssim \||\nabla|^{\epsilon} A^{df}\|_{X^{\frac{1}{2}+2\alpha_k-\epsilon-,\frac{1}{2}+}_{|\tau|=|\xi|}} \|\psi_{\pm}\|_{X^{\frac{1}{4}+\alpha_k,\frac{1}{2}+}_{\pm}} \, . $$
The case of high frequencies of $A^{df}_j$ this follows  from Proposition \ref{Prop. 1.1}, where we have to use our assumption $\alpha_k \le \frac{1}{8}$. In the case of low frequencies we can reduce to
\begin{align*}
&\int_* \frac{\widehat{u}_1(\tau_1,\xi_1) \chi_{\{|\xi_1| \le 1 \}}}{|\xi_1|^{\epsilon}\langle |\tau_1|-|\xi_1| \rangle^{\frac{1}{2}+}} 
\frac{\widehat{u}_2(\tau_2,\xi_2)}{\langle \xi_2 \rangle^{\frac{1}{4}+\alpha_k}\langle |\tau_2|-|\xi_2| \rangle^{\frac{1}{2}+}} 
\widehat{u}_3(\tau_3,\xi_3) \langle \xi_3 \rangle^{3\alpha_k --} d\xi d\tau \\
 &\lesssim \int_* \frac{\widehat{u}_1(\tau_1,\xi_1) \chi_{\{|\xi_1| \le 1 \}}}{|\xi_1|^{\epsilon}\langle |\tau_1|-|\xi_1| \rangle^{\frac{1}{2}+}} 
\frac{\widehat{u}_2(\tau_2,\xi_2)}{\langle \xi_2 \rangle^{\frac{1}{4}-2\alpha_k}\langle |\tau_2|-|\xi_2| \rangle^{\frac{1}{2}+}} 
\widehat{u}_3(\tau_3,\xi_3)  d\xi d\tau
\lesssim \prod_{i=1}^3 \|u_i\|_{L^2_{xt}} \, ,  
\end{align*}
which easily follows from the estimate
$$\big| \int v_1 v_2 v_3 dx dt \big| \lesssim \|v_1\|_{L^{\infty}_{xt}} \|v_2\|_{L^2_{xt}} \|v_3\|_{L^2_{xt}}  $$
for low frequencies of $v_1$, where we used again $\alpha_k \le \frac{1}{8}$.

We recall that $\alpha_k = \frac{1}{40}+\big(\frac{3}{2}\big)^k \delta \to \infty \, \, (k \to \infty) $ and $s=\frac{19}{40}+\delta$ with $1 \gg \delta >0$. Thus for some $k \in {\mathbb N}$ we have $\alpha_k \le \frac{1}{8}$ and $\alpha_{k+1} > \frac{1}{8}$.
Claim 4 and claim 5 imply that $\psi_{\pm} \in X_{\pm}^{\min(3\alpha_k-,s),1}[0,T]$. Interpolation with $\psi_{\pm} \in X_{\pm}^{s,0}[0,T] \supset C^0([0,T],H^s)$ gives $\psi_{\pm} \in X_{\pm}^{\min(\frac{3}{2} \alpha_k+\frac{s}{2}-,s),\frac{1}{2}+}[0,T]$ .
We notice that
$\frac{3}{2}\alpha_k + \frac{s}{2} = \frac{1}{4} + \big( \frac{1}{40} + \big(\frac{3}{2}\big)^{k+1} \delta) + \frac{\delta}{2} > \frac{1}{4}+\alpha_{k+1}$. Therefore
$\psi_{\pm} \in X_{\pm}^{\min(\frac{1}{4}+\alpha_{k+1},s),\frac{1}{2}+}[0,T] \subset X_{\pm}^{\frac{3}{8}+,\frac{1}{2}+}[0,T] $ ,
and by (\ref{2.3}) and (\ref{2.4}) we obtain $A^{cf}_j \in X^{\frac{1}{2}+,\frac{1}{2}+}_{\tau = 0}[0,T]$ and
$|\nabla|^{\epsilon} A^{df} \in  X_{\pm}^{\frac{3}{4}-\epsilon+,\frac{1}{2}+}[0,T]$ . 
In these spaces however uniqueness holds by Theorem \ref{Theorem 1.1}.
\end{proof}

\section{Proof of Theorem \ref{Theorem 1.1'} and Theorem \ref{Theorem 1.2'}}
\begin{proof}[Proof of Theorem \ref{Theorem 1.1'}]
By standard arguments we only have to show
$$ \| N(\psi_1,\psi_2,\psi_3) \|_{X^{s,-\frac{1}{2}++}_{\pm_4}} \lesssim \prod_{i=1}^3 \|\psi_i\|_{X^{s,\frac{1}{2}+}_{\pm_ i}} \, , $$
where $\pm_i$ $(i=1,2,3,4)$ denote independent signs.

By duality this is reduced to the estimates
$$ J:= \int \langle N(\psi_1,\psi_2,\psi_3),  \psi_4 \rangle dt\, dx \lesssim \prod_{i=1}^3 \|\psi_i\|_{X^{s,\frac{1}{2}+}_{\pm_i}} \|\psi_4\|_{X^{-s,\frac{1}{2}--}_{\pm_4}} \, . $$
By Fourier-Plancherel we obtain
$$ J = \int_* q(\xi_1,...,\xi_4) \prod_{j=1}^4 \widehat{\psi}_j(\xi_j,\tau_j) d\xi_1\, d\tau_1 ... d\xi_4\,d\tau_4 \, , $$
where * denotes integration over $\xi_1-\xi_2=\xi_4-\xi_3=:\xi_0$ and $\tau_1-\tau_2=\tau_4-\tau_3$ and
\begin{align*}
q =  \frac{1}{|\xi_0|^2} &[( \xi_{0_1}(\langle  \widehat{\psi}_1,\alpha_2 \widehat{\psi}_2 \rangle \langle \widehat{\psi}_3,\widehat{\psi}_4\rangle -  \langle  \widehat{\psi}_1,\widehat{\psi}_2 \rangle \langle \alpha_2 \widehat{\psi}_3,\widehat{\psi}_4\rangle) \\
& - \xi_{0_2}(\langle \widehat{\psi}_1,\alpha_1 \widehat{\psi}_2 \rangle \langle \widehat{\psi}_3,\widehat{\psi}_4\rangle -  \langle  \widehat{\psi}_1,\widehat{\psi}_2 \rangle \langle \alpha_1 \widehat{\psi}_3,\widehat{\psi}_4\rangle) ] \, .
\end{align*}
The specific structure of this term, namely the form of the matrices $\alpha_j$ plays no role in the following, thus the null structure is completely ignored.

We first consider the case $|\xi_0| \le 1$. In this case we estimate $J$ as follows:
\begin{align*}
 &\| \langle \nabla \rangle^{-s-1} |\nabla|^{-\frac{1}{2}} \langle  \psi_1,\alpha_i\psi_2 \rangle \|_{L^2_{xt}} \lesssim
\| \langle \psi_1,\alpha_i\psi_2 \rangle \|_{L^2_{x} H^{-s-1,\frac{4}{3}}_x} 
\lesssim \|\psi_1\|_{L^4_t H^s_x} \|\psi_2\|_{L^4_t H^{-s}_x} \, .
\end{align*}
In the last step we used 
\begin{align*}
\| f g \|_{H^s_x} \lesssim \|f\|_{H^s_x} \|g\|_{L^{\infty}_x} + \|f\|_{L^2_x} \|g\|_{H^{s,\infty}_x}
\lesssim \|f\|_{H^s_x} \|g\|_{H^{s+1,4}_x} \,
\end{align*}
which holds by the Leibniz rule for fractional derivatives and Sobolev's embedding theorem, and which is by duality equivalent to the required estimate
$$ \|fg\|_{H^{-s-1,\frac{4}{3}}_x} \lesssim \|f\|_{H^s_x} \|g\|_{H^{-s}_x} \, . $$
The same estimate holds for $\alpha_i = I$. Similarly we obtain 
\begin{align*}
 &\| \langle \nabla \rangle^{-s-1} |\nabla|^{-\frac{1}{2}} \langle \alpha_i \psi_3,\psi_4 \rangle \|_{L^2_{xt}}  
\lesssim \| \psi_3\|_{L^4_t H^s_x} \| \psi_4\|_{L^4_t H^{-s}_x} 
\end{align*}
for arbitrary matrices $\alpha_i$ ,
so that we obtain
\begin{align*}
J &  \lesssim \|\psi_1\|_{X^{s,\frac{1}{4}}_{\pm_1}} \|\psi_2\|_{X^{-s,\frac{1}{4}}_{\pm_2}} \|\psi_3\|_{X^{s,\frac{1}{4}}_{\pm_3}} \|\psi_4\|_{X^{-s,\frac{1}{4}}_{\pm_4}} \, , \end{align*}
which is more than enough.

From now on we assume $|\xi_0| \ge 1$. We obtain
\begin{align*}
|J|  \lesssim \sum_{j=1}^2 & \big( \|\langle  \psi_1,\alpha_j\psi_2 \rangle \|_{X_{|\tau|=|\xi|}^{s-\frac{1}{2},\frac{1}{4}}} \|\langle \psi_3,\psi_4\rangle\|_{X_{|\tau|=|\xi|}^{-s-\frac{1}{2},-\frac{1}{4}}} \\
& + \|\langle \psi_1,\psi_2 \rangle \|_{X_{|\tau|=|\xi|}^{s-\frac{1}{2},\frac{1}{4}}} \|\langle \alpha_j \psi_3,\psi_4\rangle\|_{X_{|\tau|=|\xi|}^{-s-\frac{1}{2},-\frac{1}{4}}} \big)\, .
\end{align*}
 By Proposition \ref{Prop. 1.1} with $s_0=\frac{1}{2}-s$ , $b_0=-\frac{1}{4}$ , $s_1=s_2 =s$ , $b_1=b_2=\frac{1}{2}+\epsilon$ for the first factors and $s_0 = s+\frac{1}{2}$ , $b_0 = \frac{1}{4}$ , $s_1=s$ , $s_2=-s$ , $b_1=\frac{1}{2}+\epsilon$ , $b_2=\frac{1}{2}-2\epsilon$ for the second factors
 we obtain 
$$ |J| \lesssim \prod_{j=1}^3 \|\psi_j\|_{X_{|\tau|=|\xi|}^{s,\frac{1}{2}+\epsilon}} \|\psi_4\|_{X_{|\tau|=|\xi|}^{-s,\frac{1}{2}-2\epsilon}} \, .$$
Using the embedding $X^{s,b}_{\pm} \subset X_{|\tau|=|\xi|}^{s,b}$ for $s \in {\mathbb R}$ and $b \ge 0$ we obtain the desired estimate.
\end{proof}
{\bf Remark:} The potentials are completely determined by $\psi$ and (\ref{1.10}). We have $A_{\mu} \sim |\nabla|^{-1} \langle \psi, \psi \rangle$ , so that for $s\le \frac{1}{2}$ :
$$ \|A_{\mu}\|_{\dot{H}^{2s}} \lesssim \|\langle \psi,\psi\rangle \|_{\dot{H}^{2s-1}} \lesssim  \|\langle \psi,\psi\rangle \|_{L^{\frac{1}{1-s}}} \lesssim \|\psi\|^2_{L^{\frac{2}{1-s}}} \lesssim \|\psi\|^2_{H^s} < \infty $$
and for $\frac{1}{2} < s < 1$ :
$$ \|A_{\mu}\|_{\dot{H}^{2s}} \lesssim \|\langle \psi,\psi\rangle \|_{\dot{H}^{2s-1}} \lesssim  \| \psi \|_{\dot{H}^{2s-1,\frac{2}{s}}} \|\psi\|_{L^{\frac{2}{1-s}}} \lesssim \|\psi\|_{H^s}^2 < \infty $$
as well as
$$ \|A_{\mu}\|_{\dot{H}^{\epsilon}} \lesssim \|\langle \psi,\psi\rangle \|_{\dot{H}^{\epsilon -1}} \lesssim \|\psi\|^2_{L^{\frac{4}{2-\epsilon}}} \lesssim \|\psi\|^2_{H^s} < \infty \, ,$$
thus we obtain for $0<\epsilon\ll 1$ and $s<1$ :
$$ A_{\mu} \in C^0([0,T],\dot{H}^{2s} \cap \dot{H}^{\epsilon}) \,. $$

\begin{proof}[Proof of Theorem \ref{Theorem 1.2'}]
We first show $\psi_{\pm} \in X^{0,1}_{\pm}[0,T]$. We have to prove
$$ \|N(\psi_1,\psi_2,\psi_3)\|_{L^2_t([0,T],L^2_x)} \lesssim \prod_{j=1}^3 \|\psi_j\|_{L^{\infty}_t([0,T], H^{\frac{1}{3}}_x)} \, , $$
where the implicit constant may depend on $T$ .
This follows from the estimate
\begin{align*}
\| |\nabla|^{-1} \langle  \psi_j,\alpha_i \psi_k \rangle \psi_3\|_{L^2_x} & \lesssim \| |\nabla|^{-1} \langle \psi_j,\alpha_i\psi_k \rangle \|_{L^6_x} \|\psi_3\|_{L^3_x} \lesssim \| \langle  \psi_j,\alpha_i\psi_k \rangle \|_{L^{\frac{3}{2}}_x} \|\psi_3\|_{L^3_x} \\
& \lesssim  \|\psi_j\|_{L^3_x} \|\psi_k\|_{L^3_x} \|\psi_3\|_{L^3_x} \lesssim \|\psi_j\|_{H^{\frac{1}{3}}_x} \|\psi_k\|_{H^{\frac{1}{3}}_x}\|\psi_3\|_{H^{\frac{1}{3}}_x} \, ,
\end{align*}
 and a similar estimate for the term $\| |\nabla|^{-1} \langle  \psi_j,\psi_k \rangle \alpha_i \psi_3\|_{L^2_x}$ .\\
Assume now $\psi \in C^0([0,T],H^{\frac{1}{3}+\epsilon})$ , $\epsilon > 0$. Then we have shown that $\psi_{\pm} \in X_{\pm}^{\frac{1}{3}+\epsilon,0}[0,T] \cap X^{0,1}_{\pm}[0,T]$. By interpolation we get $\psi_{\pm} \in X^{\frac{1}{4}+\frac{\epsilon}{4},\frac{1}{4}+\epsilon}_{\pm} [0,T] $ for $ \epsilon \ll 1.$ \\
Assume now that $\psi,\psi' \in C^0([0,T],H^{\frac{1}{3}+\epsilon})$ are two solutions of (\ref{1.11}),(\ref{1.12}), Then we have
\begin{align}
\label{**}
\sum_{\pm} \|\psi_{\pm}-\psi_{\pm}'\|_{X^{0,\frac{1}{2}+}_{\pm}[0,T]} &
\lesssim T^{0+} \sum_{\pm} \|N(\psi,\psi,\psi)-N(\psi',\psi',\psi')\|_{X_{\pm}^{0,-\frac{1}{2}++}[0,T]}
\\ \nonumber
& \lesssim T^{0+} \hspace{-1em} \sum_{\pm,\pm_1,\pm_2,\pm_3} \big(\|N(\psi_{\pm_1}-\psi_{\pm_1}',\psi_{\pm_2},\psi_{\pm_3})\|_{X_{\pm}^{0,-\frac{1}{2}++}[0,T]}\\ \nonumber
& \hspace{6em}+  \|N(\psi_{\pm_1}',\psi_{\pm_2}-\psi_{\pm_2}',\psi_{\pm_3})\|_{X_{\pm}^{0,-\frac{1}{2}++}[0,T]}\\ \nonumber
&\hspace{6em} +\|N(\psi_{\pm_1}',\psi_{\pm_2}',\psi_{\pm_3}-\psi_{\pm_3}')\|_{X_{\pm}^{0,-\frac{1}{2}++}[0,T]}\big)
\end{align}
Here $\pm$,$\pm_j$ $(j=1,2,3)$ denote independent signs. We want to show that for the first term the following estimate holds:
\begin{align}
\label{*}
\nonumber
J:= &\int \langle N(\psi_{\pm_1}-\psi_{\pm_1}',\psi_{\pm_2},\psi_{\pm_3}),\psi_{4} \rangle dx\,dt \\ 
&\lesssim \|\psi_{\pm_1}-\psi_{\pm_1}'\|_{X^{0,\frac{1}{2}+}_{\pm_1}} \|\psi_{\pm_2}\|_{X^{\frac{1}{4}+\frac{\epsilon}{4},\frac{1}{4}+\epsilon}_{\pm_2}} 
\|\psi_{\pm_3}\|_{X^{\frac{1}{4}+\frac{\epsilon}{4},\frac{1}{4}+\epsilon}_{\pm_3}}
\|\psi_{4}\|_{X^{0,\frac{1}{2}--}_{\pm_4}} \, .
\end{align}
We consider the case $|\xi_0|\le 1$ first. Similarly as in the proof of Theorem \ref{Theorem 1.1'} we obtain
$$ |J| \lesssim \|\psi_{\pm_1} - \psi_{\pm_1}'\|_{X^{-\frac{1}{4}-\frac{\epsilon}{4},\frac{1}{4}}_{\pm_1}}  \|\psi_{\pm_2}\|_{X^{\frac{1}{4}+\frac{\epsilon}{4},\frac{1}{4}}_{\pm_2}}
\|\psi_{\pm_3}\|_{X^{\frac{1}{4}+\frac{\epsilon}{4},\frac{1}{4}}_{\pm_3}} 
\|\psi_{4}\|_{X^{-\frac{1}{4}-\frac{\epsilon}{4},\frac{1}{4}}_{\pm_4}}\, , $$
which is more than sufficient. For $|\xi_0| \ge 1$ we obtain
\begin{align*}
|J| &\lesssim \sum_{j=1}^2 \big( \| \langle (\psi_{\pm_1} - \psi_{\pm_1}'),\alpha_j \psi_{\pm_2} \rangle \|_{X_{|\tau|=|\xi|}^{-\frac{1}{2},0}} \|\langle \psi_{\pm_3},\psi_4 \rangle\|_{X_{|\tau|=|\xi|}^{-\frac{1}{2},0}}\\ &\hspace{2.5em}
 + \| \langle \psi_{\pm_1} - \psi_{\pm_1}',\psi_{\pm_2} \rangle \|_{X_{|\tau|=|\xi|}^{-\frac{1}{2},0}} \|\langle \alpha_j \psi_{\pm_3},\psi_4 \rangle\|_{X_{|\tau|=|\xi|}^{-\frac{1}{2},0}}\big)\\
 & \lesssim \|\psi_{\pm_1} - \psi_{\pm_1}'\|_{X_{|\tau|=|\xi|}^{0,\frac{1}{2}+}} \|\psi_{\pm_2}\|_{X_{|\tau|=|\xi|}^{\frac{1}{4}+\frac{\epsilon}{4},\frac{1}{4}+\epsilon}}
 \|\psi_{\pm_3}\|_{X_{|\tau|=|\xi|}^{\frac{1}{4}+\frac{\epsilon}{4},\frac{1}{4}+\epsilon}}
 \|\psi_4\|_{X_{|\tau|=|\xi|}^{0,\frac{1}{2}--}} \, ,
\end{align*}
where we used Proposition \ref{Prop. 1.1} for the first factor with the choice
$s_0 = \frac{1}{2}$ , $b_0=0$, $s_1=0$ , $b_1=\frac{1}{2}+$ , $s_2=\frac{1}{4}+\frac{\epsilon}{4}$ , $b_2 = \frac{1}{4}+\epsilon$ and for the second factor with $s_0 = \frac{1}{2}$ , $b_0=0$ , $s_1= \frac{1}{4}+\frac{\epsilon}{4}$ , $b_1 = \frac{1}{4}+\epsilon$ , $s_2=0$ , $b_2= \frac{1}{2}--$. The embedding  $X^{s,b}_{\pm} \subset X_{|\tau|=|\xi|}^{s,b}$ for $b \ge 0$ gives (\ref{*}). The other terms in (\ref{**}) are treated similarly. We obtain
\begin{align*}
&\sum_{\pm} \|\psi_{\pm} - \psi_{\pm}'\|_{X^{0,\frac{1}{2}+}_{\pm}[0,T]} \\
& \lesssim T^{0+} \sum_{j=1}^2 \big( \|\psi_{\pm_j}\|^2_{X^{\frac{1}{4}+\frac{\epsilon}{4}, \frac{1}{4}+\epsilon}_{\pm_j}[0,T]} +\|\psi_{\pm_j}'\|^2_{X^{\frac{1}{4}+\frac{\epsilon}{4}, \frac{1}{4}+\epsilon}_{\pm_j}[0,T]} \big) \sum_{\pm} \|\psi_{\pm}-\psi_{\pm}'\|_{X^{0,\frac{1}{2}+}_{\pm}[0,T]} \, .
\end{align*}
We recall that $\psi_{\pm} \, , \, \psi'_{\pm} \in X^{\frac{1}{4}+\frac{\epsilon}{4},\frac{1}{4}+\epsilon}_{\pm} [0,T] $ , so that for sufficiently small $T$ 
this implies $\|\psi_{\pm} - \psi_{\pm}'\|_{X^{0,\frac{1}{2}+}_{\pm}[0,T]}=0$  , thus local uniqueness.
By iteration $T$ can be chosen arbitrarily.
\end{proof}

\end{document}